\newtheorem{theorem}{Theorem}%[section]
\newtheorem{lemma}[theorem]{Lemma}
\newtheorem{proposition}[theorem]{Proposition}
\newtheorem{itheorem}{Theorem}
\theoremstyle{plain}
\newtheorem{definition}[theorem]{Definition}
\newtheorem{remark}[theorem]{Remark}
\newcommand{\nc}{\newcommand}
\nc{\cat}{\mathcal{V}}
\newcommand{\arxiv}[1]{\href{http://arxiv.org/abs/#1}{\tt arXiv:\nolinkurl{#1}}}
\newcommand{\id}{\operatorname{id}}
\renewcommand{\dim}{\operatorname{dim}}
\newcommand{\rk}{\operatorname{rk}}
\newcommand{\Bi}{\mathbf{i}}
\newcommand{\Bd}{\mathbf{d}}
\nc{\eE}{\EuScript{E}}
\nc{\eF}{\EuScript{F}}
\newcommand{\K}{\mathbf{k}}
\newcommand{\Z}{\mathbb{Z}}
\newcommand{\R}{\mathbb{R}}
\newcommand{\C}{\mathbb{C}}
\newcommand{\bd}{\partial}
\newcommand{\la}{\leftarrow}
\nc{\Bv}{\mathbf{v}}
  \nc{\Bw}{\mathbf{w}}
\nc{\coho}{\EuScript{G}}
\nc{\sllhat}{\mathfrak{\widehat{sl}}_\ell}
\nc{\slehat}{\mathfrak{\widehat{sl}}_e}
\nc{\glehat}{\mathfrak{\widehat{gl}}_e}
\renewcommand{\la}{\lambda}
\newcommand{\al}{\alpha}
\newcommand{\Hom}{\operatorname{Hom}}
\newcommand{\RHom}{\R\operatorname{Hom}}
\nc{\tU}{\mathcal{U}}
\nc{\eU}{\EuScript{U}}
\renewcommand{\bd}{{\mathbf{d}}}
\nc{\lift}{\gamma}
\newcommand{\cO}{\mathcal{O}}
\newcommand{\Ext}{\operatorname{Ext}}
\newcommand{\excise}[1]{}
\newcommand{\End}{\operatorname{End}}
\nc{\dbU}{\dot{\bf U}}
\nc{\dbB}{\dot{\bf B}}
\nc{\Fl}{\operatorname{Fl}}
\newcommand{\thetitle}{Comparison of canonical bases for Schur
  and universal enveloping algebras}
\begin{document}

\renewcommand{\theitheorem}{\Alph{itheorem}}
\usetikzlibrary{decorations.pathreplacing,backgrounds,decorations.markings}
\tikzset{wei/.style={draw=red,double=red!40!white,double
    distance=1.5pt,thin}}
\tikzset{dir/.style={postaction={decorate,decoration={markings,
    mark=at position .8 with {\arrow[scale=1.3]{<}}}}}}
\tikzset{rdir/.style={postaction={decorate,decoration={markings,
    mark=at position .2 with {\arrow[scale=1.3]{>}}}}}}

\noindent {\Large \bf 
\thetitle}
\bigskip\\
{\bf Ben Webster}\footnote{Supported by the NSF under Grant DMS-1151473}\\
Department of Mathematics,  University of Virginia, Charlottesville, VA
\bigskip\\
{\small
\begin{quote}
\noindent {\em Abstract.} 
We show that the canonical bases in $\dot{U}(\mathfrak{sl}_n)$ and the
Schur algebra are compatible; in fact we extend this result to
$p$-canonical bases.  This follows immediately from a fullness result
for a functor categorifying this map.  In order to prove this result, we also
explain the connections between categorifications of the Schur algebra which arise from parity
sheaves on partial flag varieties, singular Soergel bimodules and
Khovanov and Lauda's ``flag category,'' which are of some independent interest.
\end{quote}
}
\bigskip

\section{Introduction}
\label{sec:introduction}

Numerous algebras and representations that appear in Lie theory have
bases which are called ``canonical.''  There are a variety of arguments
for the importance of these bases, but surely one of their most
desirable properties is that these bases match under natural maps of algebras.

The example we'll consider in this paper is the natural projection
$\phi\colon U_q(\mathfrak{sl}_n)\to S_q(d,n)$ from the quantized universal
enveloping algebra to the $q$-Schur algebra.  The latter algebra
actually appears more naturally as the modified quantum group $\dot
{\bf U}$ with
idempotents $1_\la$ for the different weights $\la$ added.   In this
case, the Schur algebra $S_q(d,n)$ can be thought of as the quotient
of $\dot U$ which kills $1_\la$ is $\la$ cannot be written in the form
$\la=(a_1,\dots, a_n)$ with $\sum_ia_i=d$ and $a_i\geq 0$.  

The algebra $\dot U$ is endowed with a {\bf canonical basis} $\dot {\bf{B}}$
defined by
Lusztig \cite{Lusbook}, and the Schur algebra $S_q(d,n)$ with a
{\bf canonical basis} $\dot {\bf{B}}_d$ (also called {\bf IC basis}) given by realizing it as
the algebra of $GL_d(\mathbb{F}_q)$-invariant functions on the space of pairs of flags of length $n$ (of all
possible dimensions) in the space $\mathbb{F}_q^d$, and considering
the functions attached the IC sheaves smooth along $GL_d(\mathbb{F}_p)$-orbits \cite{BLM}.

\begin{itheorem}\label{main}
  Under the map $\phi$, the set $\dot {\bf{B}}\setminus(\dot
  {\bf{B}}\cap \ker \phi)$ is sent bijectively to $\dot {\bf{B}}_d$.  
\end{itheorem}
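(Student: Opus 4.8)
The plan is to deduce Theorem~\ref{main} from a \emph{fullness} statement for a functor categorifying $\phi$, as announced in the abstract. Let $\tU$ be the Khovanov--Lauda--Rouquier $2$-category categorifying the modified quantum group $\dbU=\dbU(\mathfrak{sl}_n)$ and $\dot{\tU}$ its idempotent completion; by the known classification of its indecomposable $1$-morphisms, the classes of the self-dual indecomposable $1$-morphisms of $\dot{\tU}$ are exactly the canonical basis $\dbB$ of the integral form of $\dbU$. On the other side, let $\cS$ be the categorification of $S_q(d,n)$ whose split Grothendieck group is the integral form of the Schur algebra and whose self-dual indecomposable objects are the elements of $\dbB_d$. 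One can take $\cS$ to be Khovanov and Lauda's flag category; a preliminary task, of independent interest, is to identify this flag category with the category of $GL_d$-equivariant parity sheaves on the variety of pairs of partial flags in $\mathbb{F}_q^d$ and with a suitable category of singular Soergel bimodules. With any of these models fixed, the projection $\phi$ lifts to a grading-preserving additive functor $\Phi$ from the category of $1$-morphisms of $\dot{\tU}$ to $\cS$, intertwining the contravariant dualities; it sends $1_\la$ to the generating object indexed by $\la$ and the diagrammatic generating $2$-morphisms of $\tU$ to explicit morphisms in $\cS$, and one checks directly that $[\Phi]=\phi$ on Grothendieck groups.

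It suffices to show that $\Phi$ is full and essentially surjective. Granting this, Theorem~\ref{main} follows formally. First, for an indecomposable $1$-morphism $X$, fullness makes $\End(\Phi X)$ a quotient of the local ring $\End(X)$, so it is either $0$ or local; hence $\Phi X$ is either $0$ --- equivalently the class of $X$ lies in $\ker\phi$, since $\phi([X])=[\Phi X]$ --- or indecomposable, in which case $\Phi X$ is moreover self-dual (as $\Phi$ commutes with the duality and $X$ is self-dual), so $[\Phi X]\in\dbB_d$. Thus $\phi$ maps $\dbB\setminus(\dbB\cap\ker\phi)$ into $\dbB_d$. Essential surjectivity, immediate from the construction because $\cS$ is generated, as an idempotent-complete graded additive category, by the images of the $1_\la$, gives surjectivity onto $\dbB_d$: any element of $\dbB_d$ is the class of an indecomposable object $Z$, and writing a preimage of $Z$ as a finite sum of indecomposables and applying the Krull--Schmidt property of $\cS$ together with the previous point, $Z\cong\Phi X$ for some indecomposable $X$ with $[X]\notin\ker\phi$. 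Finally, for injectivity suppose $X$ and $Y$ are indecomposable with $\Phi X\cong\Phi Y\neq 0$; using fullness, lift an isomorphism $\Phi X\to\Phi Y$ and its inverse to $f\colon X\to Y$ and $g\colon Y\to X$, so that $\Phi(gf)=\id_{\Phi X}$. The kernel of the surjection $\End(X)\surj\End(\Phi X)$ is a proper ideal of the local ring $\End(X)$, hence lies in $\operatorname{rad}\End(X)$; consequently $\End(\Phi X)$ is local with maximal ideal the image of $\operatorname{rad}\End(X)$, and since $gf$ maps to the unit $\id_{\Phi X}$ it lies outside $\operatorname{rad}\End(X)$ and is therefore a unit in $\End(X)$. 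Then $f$ is a split monomorphism between indecomposables, hence an isomorphism, and $X\cong Y$. This yields the asserted bijection.

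The crux is therefore the fullness of $\Phi$, which is where the work lies. The strategy is to exploit the concrete model of $\cS$: between the objects generating $\cS$, morphism spaces in the flag category --- equivalently in singular Soergel bimodules --- are described by Soergel's $\Hom$-formula and are free graded modules over an explicit polynomial algebra, with graded ranks governed by Kazhdan--Lusztig-type combinatorics. One then builds enough morphisms in $\dot{\tU}$ out of the diagrammatic generators of $\tU$ (dots, crossings, cups and caps) so that their images span these $\Hom$-spaces, and compares graded ranks: the corresponding $\Hom$-spaces in $\dot{\tU}$ are also free over a polynomial algebra --- by the Khovanov--Lauda--Rouquier diagrammatic calculus combined with the categorified Schur--Weyl action on categorified tensor space --- and are controlled by the same combinatorics, so the spanning map is forced to be surjective. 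This is the step that genuinely uses the identification of the three categorifications of the Schur algebra, and I expect it to be the main obstacle.

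The extension to $p$-canonical bases is obtained by repeating everything over a field $\K$ of characteristic $p$: the flag/parity-sheaf/Soergel model of $\cS$ is formed with $\K$-coefficients, its self-dual indecomposables then compute the $p$-canonical basis of $S_q(d,n)$ (and likewise one uses the $\K$-linear version of $\dot{\tU}$ on the other side), and the fullness of the base-changed functor $\Phi_\K$ follows either by base change from an integral form of the $\Hom$-computation above or by repeating that computation verbatim over $\K$. The formal argument of the second paragraph is insensitive to the coefficient field, so the conclusion carries over unchanged.
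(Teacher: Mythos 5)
Your overall architecture is the same as the paper's: lift $\phi$ to a functor of categorifications, prove it is full and essentially surjective, and deduce the bijection by a Krull--Schmidt/local-endomorphism-ring argument. The formal deduction in your second paragraph is essentially the paper's Lemma on full 2-functors and indecomposables, and it is sound (the paper handles injectivity by noting $\End(\Phi(X\oplus Y))/J$ cannot contain a $2\times 2$ matrix algebra; your lifting of an isomorphism and its inverse is an acceptable variant). The genuine gap is the fullness statement itself, which you correctly identify as the crux but for which the mechanism you propose cannot work. You argue that the $\Hom$-spaces on both sides are free over polynomial algebras ``controlled by the same combinatorics,'' so that a spanning set plus a graded-rank comparison forces surjectivity. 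But $\Phi$ is very far from faithful, so no rank count is available: already on $\End(1_\mu)$ the source is a polynomial ring on infinitely many bubble generators (one family for each $i$), while the target is $H^*_{(G)}(\Fl(\mu))$, and their graded dimensions do not agree. One must actually exhibit preimages of generators of the target. The paper does this in two ways: (1) an induction over the weight poset, using the triangular decomposition of $\eU$ and the biadjunction $(\eE_i,\eF_i)$ to reduce everything to $\End(1_\mu)$, followed by the explicit computation that the images of the bubble series are the ratios $c(T_{i+1})/c(T_i)$ of total Chern classes of the tautological bundles, from which one must still generate the individual $c(T_i)$; or (2) a reduction to the regular weight $(1,\dots,1)$ (adding/removing redundant flag steps and using that $\eE_i\eF_i$ contains the identity as a summand), where the target becomes ordinary Soergel bimodules and one quotes Mackaay--Sto\v{s}i\'c--Vaz hitting each Elias--Khovanov generator. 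Neither the reduction nor the generator-hitting step is supplied by your sketch.

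A second, related problem: as stated, your fullness claim for the KLR 2-category $\tU$ with arbitrary coefficients is false. The image of $\End_{\tU}(1_\mu)$ in $H^*_{(G)}(\Fl(\mu))$ contains only the ratios $c(T_{i+1})/c(T_i)$; to recover the individual Chern classes one must either adjoin the extra bubbles labeled $n$ (i.e.\ replace $\tU$ by the enlarged category $\eU$ coming from the inclusion into $\tU_{\mathfrak{sl}_{n+1}}$), or invert $n$ in $\K$ and extract an $n$-th root of a power series; equivariantly the enlargement is needed even in characteristic $0$. Consequently your final paragraph, asserting that the characteristic-$p$ case goes through ``verbatim'' or by base change, breaks down exactly when $p\mid n$, which is part of what the $p$-canonical statement is supposed to cover. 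You also assume as given that the self-dual indecomposables of the flag category categorify $\dbB_d$; this is not automatic and is proved in the paper via the parity-sheaf model, the trace of Frobenius, and the BLM realization, so it should be counted as part of the work rather than part of the setup.
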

It's worth noting that this theorem was proven in \cite{SV00}.
However, there it is submerged as a special case of a more general
theorem, and new techniques have appeared in the literature in the
time since that paper that allow us to give a more straightforward and
modern proof.    Key among these is the notion of a {\bf categorical
  action} introduced by Rouquier \cite{Rou2KM} and Khovanov and Lauda
\cite{KLIII}.  We also give a generalization of this result to include
$p$-canonical bases, which proceeds along the same lines.  We also
include several results which while familiar-sounding to an expert
reader seem not to have made a clear appearance in the literature.

The map $\phi$ has a categorification which is well-established
in the literature (though its connection to the Schur algebra is
perhaps less well-known).  For simplicity in the introduction, we'll
only use categorifications of characteristic 0.
\begin{itemize}
\item The algebra $\dbU$ is the Grothendieck group of a 2-category
  $\tU$ defined in Rouquier \cite{Rou2KM} and Khovanov and Lauda
\cite{KLIII} (these definitions were recently shown to be equivalent
by Brundan \cite{Brundandef}). Actually, we'll use a slightly modified
category $\EuScript{U}$ with the same Grothendieck group.  The indecomposable 1-morphisms
correspond to the canonical basis $\dbB$ by \cite[Th. A]{WebCB}.
\item The algebra  $S_q(d,n)$ is categorified by Khovanov and Lauda's
  flag category $\mathsf{Flag}_d$ \cite[\S 5.3]{KLIII}.   The indecomposable 1-morphisms
correspond to the canonical basis $\dbB_d$, a fact we'll establish later.
\item The map $\phi$ is categorified by a 2-functor $\Phi\colon \EuScript{U}\to
  \mathsf{Flag}_d$ defined by Khovanov and Lauda.  
\end{itemize}

This categorical perspective helps to show the match of the bases
above; it follows immediately from the fact that functor above sends
indecomposable objects to indecomposable objects, itself a consequence
of:
\begin{itheorem}\label{thm:full}
  The functor $\Phi$ is full on 2-morphisms, i.e., it induces a
  surjective map $\Hom_{\tU}(u,v)\to \Hom_{\mathsf{Flag}_d} (\Phi
  u,\Phi v)$ for all 1-morphisms $u,v$.
\end{itheorem}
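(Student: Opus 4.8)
The plan is to pass to the geometric model of $\mathsf{Flag}_d$, compute the target $2$-morphism spaces there, and then produce an explicit spanning set that visibly lies in the image of $\Phi$.

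First I would cut down to a manageable family of Hom spaces. Both $\EuScript{U}$ and $\mathsf{Flag}_d$ are $2$-categories in which each generating $1$-morphism $\mathcal{E}_i 1_\la$, $\mathcal{F}_i 1_\la$ has a biadjoint, namely the other kind up to a grading shift, and the units and counits of these adjunctions are among the diagrammatic generators of $\EuScript{U}$; as $\Phi$ is defined on those generators and preserves all relations, in particular the zig-zag relations, it carries the adjunction data of $\EuScript{U}$ onto that of $\mathsf{Flag}_d$. A standard argument bending $\mathcal{F}$-strands into $\mathcal{E}$-strands then reduces the theorem to the surjectivity of
\[
\Hom_{\EuScript{U}}(\mathcal{E}_{\mathbf i}1_\la,\mathcal{E}_{\mathbf j}1_\la)\longrightarrow \Hom_{\mathsf{Flag}_d}(\Phi\mathcal{E}_{\mathbf i}1_\la,\Phi\mathcal{E}_{\mathbf j}1_\la)
\]
for all weights $\la$ and sequences $\mathbf i,\mathbf j$ (both sides vanish unless $\mathbf i$ and $\mathbf j$ are permutations of one another, so I assume they are). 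The source here is spanned, by the KLR diagrammatics, by crossing diagrams decorated with dots.

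Next I would make the target concrete using the identifications — of independent interest in this paper — of $\mathsf{Flag}_d$ with parity sheaves on partial flag varieties and with singular Soergel bimodules. Under them $\mathcal{E}_i 1_\la$ becomes the correspondence bimodule of the smooth proper one-step forgetful map between partial flag varieties, and the target above is identified with the $GL_d$-equivariant cohomology of the iterated fibre product of flag varieties read off from $\mathbf i$ and $\mathbf j$. That cohomology is a free module of finite rank over $H^*_{GL_d}(\mathrm{pt})$ with a Schubert-type basis, and — this is the crucial input — it is generated as an algebra over $H^*_{GL_d}(\mathrm{pt})$ by the first Chern classes of the tautological line bundles inserted at the successive steps together with the fundamental classes of the obvious diagonal subvarieties.

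Now $\Phi$ sends the diagrammatic generators of $\EuScript{U}$ to exactly these classes: an upward dot goes to multiplication by a tautological Chern class, a cup or cap goes to the Gysin pushforward or pullback along the inclusion of a smaller flag variety, and an upward crossing $\mathcal{E}_i\mathcal{E}_j\to\mathcal{E}_j\mathcal{E}_i$ goes to the natural comparison of composite correspondences — a divided-difference (Demazure) operator when $i=j$, the transposition isomorphism when $|i-j|\geq 2$. Expressing a Schubert-type basis element of the target as a composite of such operators, which is exactly what the generation statement of the previous paragraph provides, and lifting factor by factor through $\Phi$, exhibits that basis element in the image, which proves fullness. The main obstacle is that generation statement in the presence of ``long jumps'': the iterated fibre products are governed by parabolic double cosets $W_\la\backslash W/W_\mu$ rather than by words in simple reflections, so one must check that one-step Gysin maps and divided differences are enough to build every Schubert class, not only those indexed by reduced words. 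I expect to prove this by induction on the Bruhat order on double cosets, peeling off one elementary correspondence at a time; the projection-formula identities making the induction go are the geometric incarnations of the KLR and $\mathfrak{sl}_2$ relations, and the base case — that $\End_{\EuScript{U}}(\mathcal{E}_i^{(k)}1_\la)$ surjects onto the cohomology of the relevant Grassmannian — is the classical presentation furnished by the nilHecke algebra.
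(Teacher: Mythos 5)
Your bending reduction and the overall shape of your argument---compute the target explicitly and lift a generating set through $\Phi$---are reasonable and broadly parallel to the paper (Proof \#2 there does exactly such a lifting, after reducing to the regular weight $(1,\dots,1)$, using Elias--Khovanov's generators for Bott--Samelson bimodules and the explicit diagrams of Mackaay--Sto\v{s}i\'c--Vaz; Proof \#1 instead inducts on the weight, peeling off $\eE$'s and $\eF$'s by adjunction). However, there is a genuine gap at the one case your reduction cannot avoid, namely $u=v=\id_\mu$, where the target is $H^*_{(G)}(\Fl(\mu))$. Your key claim that $\Phi$ hits ``multiplication by the tautological Chern classes'' is not available there: $\End(\id_\mu)$ in the source 2-category is spanned by closed diagrams (bubbles), and by Khovanov--Lauda \cite[(6.47)]{KLIII} their images generate only the subalgebra generated by the coefficients of the ratios $c(T_{i+1})/c(T_i)$, not the individual classes $c_k(T_i)$. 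Equivariantly this subalgebra is proper for every $n$ (already in degree $2$ it is $(n-1)$-dimensional while the target's degree-$2$ part has dimension $n$), and non-equivariantly one recovers the $c(T_i)$ only when $n$ is invertible in $\K$, by extracting an $n$-th root from $c(T_k)^n$ using $c(T_1)\cdots c(T_n)=1$. This is precisely why the paper enlarges $\tU$ to $\eU$: the extra label-$n$ bubbles map to $c(T_n)$, whence all $c(T_i)$ lie in the image. Without that enlargement (or the invertibility hypothesis in the non-equivariant case) the fullness you are aiming for is actually false for the restriction to $\tU$ equivariantly, so your argument as written proves too much; any correct proof must confront this identity-bimodule case separately, as both of the paper's proofs do.

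A secondary, repairable point: for $\mathbf i\neq\mathbf j$ the target $\Hom$ space is not an algebra, so ``generated as an algebra over $H^*_G(\mathrm{pt})$'' does not parse as stated; it must be recast as a statement about generation under composition with elementary morphisms (one-step Gysin maps, Demazure operators, and multiplications), which is in substance what your proposed induction on double cosets would have to establish and what the Elias--Khovanov/MSV input supplies in the paper's Proof \#2. That part of your plan is workable; the missing idea is the treatment of $\End(\id_\mu)$ and the attendant $\eU$-versus-$\tU$ (or $n$ invertible) distinction.
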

Besides its interesting consequence for canonical bases, this also a
beautiful illustration of the power of a categorical approach.  While
it may be that a direct proof of Theorem \ref{main} on the level of Schur algebras exists, the author has had no
luck in finding one.

It's also worth noting that a similar fullness result is proven for
$\mathfrak{sl}_2$ by Beliakova and Lauda in \cite{BLflag}.  We will
briefly indicate how their result generalizes in this case.

\section{Background}
\label{sec:background}

Throughout, we'll fix a field $\K$ which may be of any characteristic.

\subsection{Categorified $\mathfrak{sl}_n$}
\label{sec:categ-mathfr}

One of the basic objects we'll consider is the 2-category $\tU$
categorifying $\dbU$.  Rather than give a full definition of this
category, we refer the reader to a number of papers which give this
definition, such as \cite{KLIII,Rou2KM, CaLa,Webmerged}.  We will
follow the conventions of \cite{KLIII} for simplicity, with the single
exception that we think of the objects as elements of $\Z^n$ (i.e. the
$\mathfrak{gl}_n$ weight lattice) rather than $\Z^n/\Z\cdot (
1,\dots, 1)$, the weight lattice of  
$\mathfrak{sl}_n$. 
As we
mentioned earlier, work of \cite{Brundandef} shows that while the
definitions given in other papers may not immediately look equivalent, they in fact are.
The important points of the definition are that $\tU$ is a graded strict
2-category with:
\begin{itemize}
\item objects given by the weight lattice of $\mathfrak{gl}_n$, given
  by $\Z^n$.   In this space there are distinguished elements
  $\al_i=(0,\dots, 1,-1,\dots 0)$.  
\item 1-morphisms freely generated by symbols $\eE_i\colon \la\to \la+\al_i$
  and $\eF_i\colon \la\to \la-\al_i$.  Note that if two vectors do not
  have the same sum of their entries, then there are no 1-morphisms linking them.
\item 2-morphisms given by certain diagrams which induce a
  biadjunction between $\eF_i$ and $\eE_i$ (up to grading shift), and
  an isomorphism  (up to grading shift) \[\id_{\mu}^{\mu_{i+1}}\oplus
  \eE_i\eF_i\cong \eF_i\eE_i\oplus \id_{\mu}^{\mu_{i}}\] where
  $\mu=(\mu_1,\dots, \mu_n)$.  
\end{itemize}
\begin{remark}
  As mentioned above, this is a slight variation on Khovanov and
  Lauda's category; if we consider weights whose entries have a fixed
  sum $d$, this is equivalent to Khovanov and Lauda's category for
  weights where the entries sum to $d\pmod n$ (in the weight lattice
  of $\mathfrak{sl}_n$, only this residue is well defined) via the
  natural map $\Z^n\to \Z^n/\Z\cdot \langle( 1,\dots, 1)$.
\end{remark}

This category carries a duality functor defined in \cite[\S 3.3.2]{KLIII} such that $\eE_i$ and $\eF_i$
are self-dual.
As mentioned in the introduction, this serves as a categorical avatar
of the canonical basis.  
\begin{theorem}[\mbox{\cite[Th. A]{WebCB}}]
  There is an isomorphism $K(\tU)\cong U_q(\mathfrak{sl}_n)$.  If $\K$
  has characteristic 0, then the indecomposable self-dual 1-morphisms in $\tU$
  match the basis $\dbB$.
\end{theorem}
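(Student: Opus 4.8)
The plan is to proceed in two stages: first establish the ring isomorphism $K(\tU)\cong\dbU$ (the modified form of $U_q(\mathfrak{sl}_n)$), and then, assuming $\operatorname{char}\K=0$, identify the classes of indecomposable self-dual $1$-morphisms with Lusztig's canonical basis $\dbB$.

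\emph{Stage 1: the isomorphism.} Define a homomorphism $\dbU\to K(\tU)$ on generators by $E_i1_\la\mapsto[\eE_i1_\la]$ and $F_i1_\la\mapsto[\eF_i1_\la]$. This is well defined: the commutator relation between $E_i$ and $F_i$ is the decategorification of the built-in isomorphism $\id_\mu^{\mu_{i+1}}\oplus\eE_i\eF_i\cong\eF_i\eE_i\oplus\id_\mu^{\mu_i}$, the relation $E_iF_j=F_jE_i$ for $i\ne j$ of $\eE_i\eF_j\cong\eF_j\eE_i$, and the quantum Serre relations follow from the nilHecke/KLR $2$-morphisms on $\eE_i\eE_i$, which both produce the divided powers $\eE_i^{(a)}$ as summands of $\eE_i^a$ with the correct $q$-binomial multiplicities and force the Serre combination to vanish in the Grothendieck group. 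Surjectivity is immediate, since every $1$-morphism is, up to grading shift and direct sum, a composite of the $\eE_i$ and $\eF_i$. For injectivity one exhibits a faithful categorical module: the action of $\tU$ on $\bigoplus_\Lambda R^\Lambda\mmod$ (over dominant $\Lambda$, with $R^\Lambda$ the cyclotomic KLR algebras), whose Grothendieck group is $\bigoplus_\Lambda V_q(\Lambda)$, on which $\dbU$ acts faithfully. This is essentially Khovanov--Lauda's argument in type $A$.

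\emph{Stage 2: the canonical basis in characteristic $0$.} The duality functor on $\tU$ fixes $\eE_i$ and $\eF_i$, is contravariant, and reverses the grading, so it induces on $K(\tU)\cong\dbU$ a ring involution which one checks coincides with Lusztig's bar involution; hence the classes $[b]$ of indecomposable self-dual $1$-morphisms $b$ form a bar-invariant $\Z[q,q^{-1}]$-basis of $\dbU$. To recognize this basis as $\dbB$, I would invoke Lusztig's characterization of $\dbB$ as the unique bar-invariant basis that is unitriangular, with off-diagonal coefficients in $q\Z[q]$, with respect to the PBW-type monomial basis (fix a reduced word and form $\eF_{i_1}^{(a_1)}\cdots\eF_{i_k}^{(a_k)}1_\la$, dually on the $\eE$ side). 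Each such monomial is represented by an explicit composite of divided-power $1$-morphisms; after passing to a cyclotomic quotient, where $\tU$ becomes a genuine positively graded algebra with semisimple degree-zero part, this composite has a distinguished top indecomposable summand, occurring once and in degree zero, while every other indecomposable summand occurs with coefficients in $q\Z[q]$ (morphisms between non-isomorphic indecomposables are concentrated in strictly positive degree, and the standard object has head in degree zero). Lusztig's uniqueness then forces $[b]\in\dbB$, and a count of the two bases shows the correspondence is a bijection.

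\emph{The main obstacle.} The delicate step, and the only place the hypothesis $\operatorname{char}\K=0$ is really used, is the positivity assertion that these transition coefficients lie in $q\Z[q]$ and not merely in $\Z[q,q^{-1}]$; this is not formal from the mere presence of a grading. I would establish it by comparing the cyclotomic KLR algebras with Ext-algebras of parity sheaves on convolution flag/quiver varieties --- in characteristic zero parity sheaves are IC sheaves, so the grading is pure and the multiplicity polynomials are Kazhdan--Lusztig-positive --- or equivalently by invoking the known characteristic-zero identification of indecomposable graded projectives over $R^\Lambda$ with the canonical basis of $V_q(\Lambda)$ and transporting it along the faithful $2$-representation via the $\eE_i$--$\eF_i$ biadjunction. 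Either route forces careful bookkeeping of the passages between $\tU$, its cyclotomic quotients, and $\dbU$; that bookkeeping, rather than any single estimate, is where the real work lies, and it is carried out in \cite{WebCB}.
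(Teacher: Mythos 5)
This theorem is not proved in the paper at all --- it is quoted from \cite[Th.~A]{WebCB} --- so the only thing to assess is whether your sketch would stand on its own, and it has two genuine gaps beyond the bookkeeping you defer to \cite{WebCB}. First, in Stage 1 surjectivity of $\dbU\to K(\tU)$ is not ``immediate.'' The split Grothendieck group is spanned by classes of indecomposables, and while every indecomposable is a \emph{summand} of a composite of the $\eE_i,\eF_i$, the class of a summand need not lie in the $\Z[q,q^{-1}]$-span of the classes of such composites; this is exactly the hard point (Khovanov--Lauda's nondegeneracy problem), which in type $A$ is settled using the flag $2$-representation --- the very functor this paper studies --- and not by a formal argument.

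Second, and more seriously, your recognition principle in Stage 2 is misstated for the \emph{modified} algebra: $\dbB$ is not characterized by bar-invariance plus unitriangularity against a PBW-type monomial basis of $\dbU 1_\la$ (that characterization is for $U^\pm$, not for $\dbU$, where such monomials in the $\eE$'s and $\eF$'s do not even form a basis). Lusztig defines and characterizes $\dbB$ through based modules, via the canonical bases of tensor products $V_\la\otimes V_\mu$ in the limit $\la,\mu\to\infty$, or equivalently one can use bar-invariance together with almost-orthonormality for Lusztig's bilinear form, which is what the graded $\Hom$-pairing in $\tU$ actually categorifies. For the same reason your fallback route through single cyclotomic quotients $R^\Lambda$ is insufficient: knowing that indecomposable projectives over $R^\Lambda$ match the canonical basis of $V_q(\Lambda)$ does not pin down which bar-invariant basis of $\dbU$ you have, since elements of $\dbU 1_\zeta$ are not separated in the required triangular fashion by their action on single highest-weight categorifications; one needs (at least two-fold) tensor product categorifications, which is how \cite{WebCB} actually proceeds. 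A smaller slip: cyclotomic KLR algebras are not positively graded with semisimple degree-zero part, so the positivity you want cannot be extracted that way; the relevant positivity statement is that $\End_{\tU}(u)$ is non-negatively graded with scalars in degree zero for $u$ indecomposable, which is itself a theorem of \cite{WebCB} rather than something formal.
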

For a general field $\K$, the classes of the self-dual indecomposables only
depend on the characteristic of the field by \cite[5.11]{WebCB};  the
indecomposables with $\K=\mathbb{F}_p$ give a new basis $\mathbf{\dot B}^{(p)}$, usually
called {\bf $p$-canonical} or {\bf orthodox}; this will agree with the
usual canonical basis for $p$ large.

It will be useful for us to consider a slightly larger category; there
is a natural inclusion of $\tU\to \tU_{\mathfrak{sl}_{n+1}}$ sending
$(\la_1,\dots, \la_n)\mapsto (\la_{1},\dots,\la_{n},0)$.  We let
$\eU$ be the 2-subcatgory given by 1-morphisms in this image, but with
2-morphisms calculated in $\tU_{\mathfrak{sl}_{n+1}}$.
In terms of diagrams, this means we include bubbles (but not open
strands) labeled by the root $\al_n$, and include the local relations
we expect.  The basis of Khovanov and Lauda given in \cite{KLIII}
shows that for 1-morphisms $u$ and $v$, $\Hom_{\eU}(u,v)\cong
\Hom_{\tU}(u,v)\otimes
\K[\circlearrowleft_{n}\! (1),\circlearrowleft_{n}\!(2),\cdots]$ for
the bubbles $\circlearrowleft_n \! (k)$ with label $n$ and degree $k$
at the far left of the diagram.
The result $\eU$ is a 2-category with
essentially the same structure as $\tU$, but with slightly enlarged
endomorphisms.  In particular:
\begin{proposition}
  For any indecomposable 1-morphism $u$ in $\tU$, its image in $\eU$
  is also indecomposable.
\end{proposition}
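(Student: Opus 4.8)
The plan is to exploit the explicit description of $\Hom$-spaces in $\eU$ given just above the statement: for any 1-morphisms $u,v$ in $\tU$ we have
\[
\Hom_{\eU}(u,v)\;\cong\;\Hom_{\tU}(u,v)\otimes_{\K}\K[\circlearrowleft_{n}\!(1),\circlearrowleft_{n}\!(2),\dots],
\]
where the adjoined polynomial generators are the left-most bubbles labelled $n$. In particular, taking $u=v$, the endomorphism ring $\End_{\eU}(u)$ is obtained from $\End_{\tU}(u)$ by extension of scalars along the inclusion $\K\hookrightarrow\K[\circlearrowleft_{n}\!(1),\circlearrowleft_{n}\!(2),\dots]=:P$. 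So the first step is to record that $\End_{\eU}(u)\cong \End_{\tU}(u)\otimes_{\K}P$ as a $P$-algebra.

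The second step is the standard idempotent-lifting argument. Since $u$ is indecomposable in $\tU$ and $\tU$ is a graded $\K$-linear category in which $\End_{\tU}(u)$ is a finite-dimensional (in each degree) graded algebra, $\End_{\tU}(u)$ is local in the graded sense: its degree-zero part is local and every element of positive degree, together with the radical of the degree-zero part, is nilpotent, so the only homogeneous idempotents are $0$ and $1$. Now I would argue that tensoring with the positively-graded polynomial ring $P$ — whose positive part consists of nilpotent-in-no-finite-way but \emph{positively graded} elements — cannot create new idempotents. Concretely: an idempotent $e\in \End_{\tU}(u)\otimes P$ has degree $0$, hence lies in $\bigl(\End_{\tU}(u)\otimes P\bigr)_0=\bigl(\End_{\tU}(u)\bigr)_0$ because $P$ is concentrated in strictly positive degrees (each $\circlearrowleft_{n}\!(k)$ has degree $k\geq 1$), so $P_0=\K$. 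Therefore $e\in \bigl(\End_{\tU}(u)\bigr)_0$, which is a local ring, forcing $e\in\{0,1\}$. Hence $\End_{\eU}(u)$ has no nontrivial homogeneous idempotents, so $u$ is indecomposable in $\eU$.

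The one point requiring a little care — and the place I expect the only genuine friction — is the grading bookkeeping: one must be sure that in the conventions of \cite{KLIII} the bubbles $\circlearrowleft_{n}\!(k)$ that get adjoined really do have strictly positive degree (equivalently, that the ``fake bubbles'' of non-positive degree are already present in $\tU$ and are not among the new polynomial generators), so that $P_0=\K$. Granting the normalization under which only honest bubbles of degree $\geq 1$ are adjoined — which is exactly the content of the displayed $\Hom$-space formula as stated — the argument above is complete. If one instead works over a field where gradedness is not available as a crutch, the same conclusion follows from the fact that $P$ is a domain and $\End_{\tU}(u)\otimes_{\K}P$ is faithfully flat over $\End_{\tU}(u)$, so idempotents of the former restrict to idempotents of the latter after specializing all $\circlearrowleft_{n}\!(k)\mapsto 0$; but the graded argument is cleaner and suffices here.
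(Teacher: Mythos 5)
Your strategy is the same as the paper's (use the formula $\Hom_{\eU}(u,v)\cong\Hom_{\tU}(u,v)\otimes_\K P$ with $P=\K[\circlearrowleft_{n}\!(1),\circlearrowleft_{n}\!(2),\dots]$ and positivity of the bubble degrees to see that passing to $\eU$ does not create new degree-zero idempotents), but there is a genuine gap at the central step. The identification $\bigl(\End_{\tU}(u)\otimes P\bigr)_0=\bigl(\End_{\tU}(u)\bigr)_0$ requires $\End_{\tU}(u)$ to vanish in negative degrees: otherwise the degree-zero part of the tensor product also contains $\bigoplus_{j>0}\bigl(\End_{\tU}(u)\bigr)_{-j}\otimes P_j$. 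You present the needed positivity as a formal consequence of indecomposability in a graded $\K$-linear category with degreewise finite-dimensional Hom spaces, but it is not: an indecomposable graded object can have (necessarily nilpotent) endomorphisms of negative degree, and 2-morphism spaces in $\tU$ do contain negative-degree elements in general. The fact that for an \emph{indecomposable} 1-morphism $u$ the algebra $\End_{\tU}(u)$ is positively graded with only scalars in degree $0$ is precisely the nontrivial input the paper takes from \cite[8.11]{WebCB}; once that is granted, the rest is immediate and is exactly the paper's one-line proof. Your auxiliary claim that ``every element of positive degree is nilpotent'' is also false in $\tU$ (dots and bubbles are non-nilpotent positive-degree endomorphisms), though it is not load-bearing.

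The gap is repairable without simply citing that result, but it takes an extra step you did not supply: Hom spaces between fixed 1-morphisms in $\tU$ are bounded below in degree, so $N=\bigoplus_{j>0}\bigl(\End_{\tU}(u)\bigr)_{-j}\otimes P_j$ is a nilpotent two-sided ideal of $\bigl(\End_{\eU}(u)\bigr)_0$ with quotient $\bigl(\End_{\tU}(u)\bigr)_0$, which is local because $u$ is indecomposable and this degree-zero algebra is finite dimensional; lifting idempotents along a nilpotent ideal then shows $\bigl(\End_{\eU}(u)\bigr)_0$ is local. Your faithful-flatness fallback has the same defect: specializing all $\circlearrowleft_{n}\!(k)\mapsto 0$ sends an idempotent to $0$ or $1$, but one still needs to know that the kernel of this specialization contains no nonzero degree-zero idempotents, which again requires either the positivity statement of \cite[8.11]{WebCB} or the nilpotence argument just sketched.
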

\begin{proof}
By \cite[8.11]{WebCB}, the algebra $\End_{\tU}(u)$ is
positively graded with only scalars in degree 0.
By the description above, $\End_{\eU}(u)$ has the same property, so
$u$ is indecomposable in $\eU$ as well.
\end{proof}

\subsection{The flag category and the Schur algebra}
\label{sec:flag-category}

In this subsection, we'll discuss the corresponding story for the
Schur algebra.  This situation is more complicated, in that the same
categorification of the Schur algebra has appeared in several guises
at different points in the literature, and some explanation is
required to explain how they are connected.

Fix an integer $d$.  Throughout, we let $G=\operatorname{GL}_d(\C)$,
and $T$ be its diagonal subgroup.
For each increasing $n-1$-tuple $0\leq d_1\leq d_2\leq \cdots \leq
d_{n-1}\leq d$, we have a corresponding flag variety $\Fl(\Bd)$ with an action of
$G$.  Following their notation, let $H_{\Bd}:=H^*(\Fl(\Bd);\K)$.  We
will also at times want to consider the the equivariant cohomology
ring $H_{\Bd}^G:=H^*_G(\Fl(\Bd);\K)$. We
let $\Bd'=0\leq d_1\leq \cdots\leq  d_{i-1}\leq d_i+1\leq\cdots \leq
d_{n-1} $ and $\Bd ^{i}=0\leq d_1\leq \cdots\leq  d_i\leq d_i+1\leq\cdots \leq
d_{n-1} $.  The cohomology ring $H_{\Bd^i}$ contains $H_{\Bd}$ and
$H_{\Bd'}$ as subrings by pullback.  Let $H_{\Bd^{+i}}$ denote the
bimodule where $H_{\Bd'}$ acts on the left and $H_{\Bd}$ on the right,
and $H_{\Bd^{-i}}$ the bimodule where we reverse these actions.
\begin{definition}
  The flag category $\mathsf{Flag}_d$ is the 2-category given by:
  \begin{itemize}
  \item an object is an $(n-1)$-tuple $\Bd$ (note: in \cite{KLIII},
    the objects are defined to be the rings themselves.  This is a
    distinction without a difference).
  \item a 1-morphism $\Bd\to\Bd'$ is an object in the subcategory of 
    $H_{\Bd'}$-$H_{\Bd}$-bimodules generated by
    bimodules of the form $H_{\Bd^{\pm i}}$ under tensor, degree
    shift, direct sum and taking of summands.
  \item a 2-morphism is a degree preserving bimodule map.
  \end{itemize}
The equivariant flag category  $\mathsf{Flag}_d^G$ is the
corresponding category with bimodules over $H_{\Bd}^G$.
\end{definition}
Note that $\mathsf{Flag}_d$ is equipped with a duality induced by the
Frobenius structure on $H_\Bd$ for each $\Bd$.

We can associate the  $\mathfrak{sl}_n$-weight $\mu_{\Bd}=(d_1,d_2-d_1,d_3-d_2,\dots,
d_{n}-d_{n-1},d-d_{n})$ to each increasing $(n-1)$-tuple $\Bd$.  The increasing property precisely guarantees
that all the entries of this weight are non-negative.  Having fixed
$d$, there is at most one $\Bd$ such that $\mu=\mu_{\Bd}$, so we can
also index flag varieties and cohomology rings with the weight
$\mu_{\Bd}$ instead.  In \cite[\S 6]{KLIII}
Khovanov and Lauda construct a functor $\Phi'\colon \tU\to
\mathsf{Flag}_d$, which sends $\mu \mapsto \Bd$ if $\mu=\mu_\Bd$ and 0
otherwise.  This functor sends \[\eE_i \mapsto
H_{\Bd^{+i}}(d_{i+1}-d_i-1)\qquad \eF_i\mapsto
H_{\Bd^{-i}}(d_i-d_{i-1}).\]  Here $M(a)$ denotes the graded module
$M$ with its grading shifted downward. Note that we have used a slightly
different grading shift from \cite[\S 6]{KLIII} which matches
better with the geometry; note that the effect is just a shift by a
fixed degree on all bimodules for $\Bd$ and $\Bd'$ fixed.

This representation is compatible with the inclusion
$\tU_{\mathfrak{sl}_n}\to \tU_{\mathfrak{sl}_{n+1}}$ in the sense that
a weight of the form $(\la_1,\dots, \la_n,0)$ will give an $n$-step flag where the last term must be the whole space for dimension
reasons.  Thus, the underlying cohomology ring is unchanged by this
inclusion, as are all the bimodules attached to the Chevalley
generators of $\tU_{\mathfrak{sl}_n}$.  Thus, this inclusion allows us
to extend the representation $\Phi'$ to a representation $\Phi$ of $\eU$.

There is a second, more geometric interpretation of this category we
should also discuss.  Let $\mathsf X=\sqcup_{\mu,\nu}
\Fl(\mu)\times \Fl(\nu)$; as with any product of a space with itself, there
is a convolution product of the constructible derived category of
$\mathsf X$ which endows this space with a monoidal structure.  We
wish to consider the subcategory of parity sheaves smooth along the
$G$-orbits, as defined by Juteau, Mautner and Williamson \cite{JMW}.
This is a monoidal subcategory
as shown in \cite[4.8]{JMW}.  If $\K$ is characteristic
0, then parity sheaves are the same as sums of shifts of simple
perverse sheaves smooth along the $G$-orbits, in which case this
follows from the decomposition theorem of Beilinson, Bernstein and Deligne.  In
2-category language:
\begin{definition}
  Let $\mathsf{Perv}_d$ be the 2-category where
  \begin{itemize}
  \item objects are weights $\mu$.
  \item 1-morphisms $\nu\to \mu$ are parity sheaves over $\K$ on
    $\Fl(\mu)\times \Fl(\nu)$ which are smooth along the $G$-orbits
    for the diagonal action. The composition of 1-morphisms \[\Hom(\mu,\nu)\times
    \Hom(\nu,\xi)\to \Hom(\mu,\xi)\] is given by
    convolution
    \[\mathscr{G}\star
    \mathscr{H}:=(p_{13})_*\big(p_{12}^*\mathscr{G}\otimes_\K p_{23}^*
    \mathscr{H}\big)\big[\dim \Fl(\mu)+\dim \Fl(\xi) \big].\]
  \item 2-morphisms are morphisms in the constructible derived
    category.
  \end{itemize}
We think of this as a graded 2-category where grading shift is the
translation $[1]$ in the derived category.  A 2-morphism of degree $m$
is thus an element of $\Ext^m$.

There is also an equivariant version $\mathsf{Perv}_d^G$ of this
category, where we consider $G$-equivariant parity sheaves.  We'll use
the notation $\mathsf{Perv}_d^{(G)}$ to indicate either one of these categories.
\end{definition}
This 2-category also has an accompanying duality, induced by Verdier
duality on parity sheaves. 

While these categories seem very different in nature, they are
actually very similar.  The category of bimodules $\mathsf{Flag}_d$ is
(to quote Soergel) the ``poor man's version'' of  $\mathsf{Perv}_d$,
following similar results of Soergel for the complete flag variety \cite{Soe90,Soe00}.
Both Theorem \ref{perv=flag} and Lemma \ref{perv=schur} seem to be
``well-known'' in the correct circles, but the author does not believe
they have appeared in the literature in this generality previously.

\begin{theorem}\label{perv=flag}
 We have an equivalence of strict graded 2-categories
 $\mathsf{Perv}_d\cong \mathsf{Flag}_d$
 (resp. $\mathsf{Perv}_d^G\cong \mathsf{Flag}_d^G$) via the 2-functor
  \[\mathscr{G}\colon\nu\to \mu \mapsto H^\bullet_{(G)}(\Fl(\mu)\times
  \Fl(\nu);\mathscr{G}) \] compatible with dualities. 
\end{theorem}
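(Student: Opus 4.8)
The plan is to build the equivalence out of the hypercohomology functor $\mathscr{G} \mapsto H^\bullet_{(G)}(\Fl(\mu)\times\Fl(\nu);\mathscr{G})$ and to verify the three things required: (i) it lands in $\mathsf{Flag}_d$, i.e.\ hypercohomology of a parity sheaf smooth along $G$-orbits is (after grading shift) a tensor product of the basic bimodules $H_{\Bd^{\pm i}}$; (ii) it is essentially surjective onto $\mathsf{Flag}_d$; and (iii) it is fully faithful on $2$-morphisms, i.e.\ it identifies $\Ext^\bullet$ between parity sheaves with degree-preserving bimodule maps after grading shift. The model for all of this is Soergel's theory for the full flag variety (\cite{Soe90,Soe00}), and the partial-flag generalization is what we need to spell out. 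I would first treat the equivariant case $\mathsf{Perv}_d^G \cong \mathsf{Flag}_d^G$, since there formality and purity are cleanest, and then deduce the non-equivariant statement by the standard base-change $H^\bullet(-) = \K\otimes_{H^\bullet_G(\mathrm{pt})} H^\bullet_G(-)$ argument, using that everything in sight is free over $H^\bullet_G(\mathrm{pt})$.

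The first step is to identify the building blocks. The generating $1$-morphisms of $\mathsf{Perv}_d$ are the (shifted) constant sheaves on the correspondence $Z_{\Bd^{i}} \subset \Fl(\Bd')\times\Fl(\Bd)$ consisting of pairs of flags both refining a common flag of type $\Bd^i$ — this is a smooth $G$-variety (it is $\Fl(\Bd^i)$ viewed inside the product), hence its constant sheaf is parity, and $H^\bullet_{(G)}$ of it is exactly $H_{\Bd^{i}}$ as an $H_{\Bd'}$-$H_{\Bd}$-bimodule, which up to grading shift is $H_{\Bd^{\pm i}}$. So $\mathscr{G}$ sends generators to generators; this handles essential surjectivity once we know $\mathscr{G}$ is monoidal. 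Monoidality is the key geometric input: I would show $H^\bullet_{(G)}(\mathscr{G}\star\mathscr{H}) \cong H^\bullet_{(G)}(\mathscr{G}) \otimes_{H_\nu^{(G)}} H^\bullet_{(G)}(\mathscr{H})$. This is where one uses that the relevant pushforwards are \emph{formal} and the sheaves \emph{pure}: the convolution variety fibers over $\Fl(\nu)$ with projective fibers, the sheaves involved are parity (hence pure, by \cite{JMW}), so the decomposition theorem / Deligne's purity gives degeneration of the relevant spectral sequences and a Künneth-type isomorphism over the (formal, since $\Fl(\nu)$ has a paving by affines) cohomology of $\Fl(\nu)$. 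Concretely: parity sheaves on $\Fl(\mu)\times\Fl(\nu)$ are iterated $\star$-products of the generators plus shifts and summands, so it suffices to check the Künneth formula for a single $\star$ against a generator, where it reduces to a base-change/projection-formula computation along the smooth proper map defining the correspondence.

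The remaining step — and the one I expect to be the main obstacle — is full faithfulness on $2$-morphisms: for parity complexes $\mathscr{G},\mathscr{H}$ one must show
\[
\Ext^\bullet_{(G)}(\mathscr{G},\mathscr{H}) \;\xrightarrow{\ \sim\ }\; \Hom^\bullet_{H_\mu^{(G)}\text{-}H_\nu^{(G)}\text{-bimod}}\bigl(H^\bullet_{(G)}(\mathscr{G}),\,H^\bullet_{(G)}(\mathscr{H})\bigr).
\]
The standard route (Soergel's) is: reduce to $\mathscr{G} = $ a generator (by biadjunction, since the generators are self-biadjoint up to shift, and convolution with a generator is both a left and right adjoint to convolution with its dual); then $\Ext^\bullet(\text{generator},\mathscr{H})$ computes, by adjunction, the stalk/costalk of $\mathscr{H}$ along the corresponding correspondence, i.e.\ $H^\bullet_{(G)}$ of a restriction of $\mathscr{H}$, and one matches this with the bimodule-$\Hom$ side using that $H^\bullet_{(G)}(\mathscr{H})$ is free as a one-sided module (again purity/parity) so that $\Hom$ out of the cyclic bimodule $H_{\Bd^{i}}$ is computed by a single specialization. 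Injectivity of the map is formal once $\mathscr{G}$ acts faithfully via hypercohomology on the generating objects; surjectivity is the content, and it rests on the freeness statements above plus the observation that both sides are finitely generated free modules over $H^\bullet_G(\mathrm{pt})$ of the same graded rank (computed by counting $G$-orbits / parity-sheaf multiplicities, resp.\ by the bimodule structure), so that an injection between them of equal rank in each degree is an isomorphism. I would organize the write-up so that the equivariant freeness and formality facts are isolated as the one genuinely nontrivial lemma, and everything else is bookkeeping reduction to generators; the non-equivariant case then follows by applying $\K\Lotimes_{H^\bullet_G(\mathrm{pt})}(-)$ throughout and noting this is exact on the free modules in question.
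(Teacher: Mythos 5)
Your overall strategy --- hypercohomology as a monoidal functor, reduction to generating objects, and a Soergel-style full-faithfulness argument --- is the same one the paper uses, but two of your steps have real gaps as written. First, you take as given that the $1$-morphisms of $\mathsf{Perv}_d$ are generated (under convolution, shifts and summands) by the constant sheaves on the elementary one-step correspondences, so that landing in $\mathsf{Flag}_d$ and essential surjectivity become automatic once monoidality is known. What \cite[4.6]{JMW} actually provides is generation by pushforwards from \emph{arbitrary} generalized Bott--Samelson varieties, i.e.\ by bimodules of the form $R_{\mu_1}\otimes_{R_{\nu_1}}\cdots\otimes_{R_{\nu_{k-1}}}R_{\mu_k}$ for arbitrary chains of parabolics, whereas Khovanov--Lauda's category $\mathsf{Flag}_d$ only allows the special one-up/one-down steps. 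Showing that the general tensor products are already (summands of) the restricted ones is a genuine combinatorial input; the paper obtains it from the theory of singular Soergel bimodules and ``reduced translation sequences'' (\cite[\S 1.3, 5.4.2]{Wthesis}), together with the observation that in type A a reduced translation sequence has exactly the Khovanov--Lauda form. Without some version of this step your functor is not even known to take values in $\mathsf{Flag}_d$.

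Second, your justification of monoidality and of the freeness statements via ``the decomposition theorem / Deligne's purity'' does not apply in the setting of the theorem: $\K$ is an arbitrary field, and in positive characteristic parity sheaves are not pure and the decomposition theorem fails --- this is the whole reason parity sheaves are used. The required degeneration/K\"unneth statements must instead come from parity vanishing (even maps, affine pavings), which is what the paper invokes via \cite[3.3]{WW}. Relatedly, your full-faithfulness step is under-argued in both directions: injectivity is not ``formal'' (the paper relies on Soergel's argument \cite[3.2.6]{Soe00}, which itself required the correction in \cite{Soecor}), and the matching of graded ranks needed for surjectivity has to be proved rather than asserted; the paper does this by combining the fixed-point formula \cite[3.4.1]{BGS96} for $\RHom$ between parity complexes with the $\Delta$/$\nabla$-multiplicity formula for Homs of singular Soergel bimodules \cite[7.4.1]{Wthesis}. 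Your outline points in the right direction, but these items --- the reduction to Khovanov--Lauda's restricted bimodules, a characteristic-free replacement for purity, and an actual rank computation --- are the substantive content and are missing as written.
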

\begin{proof}
  The essential point is that $\mathsf{Flag}_d$ and the cohomology of
  parity sheaves in $\Fl(\mu)\times \Fl(\nu)$ are both
  descriptions of singular Soergel bimodules modulo the positive
  degree elements in the ring of symmetric polynomials
  $R=H^\bullet(BG)=\K[x_1,\dots,x_d]^{S_d}$. The category
  $\mathsf{Flag}_d^G$  and equivariant cohomology correspond to Soergel
  bimodules without this quotient.  Since this case is so similar, we
  will only consider the non-equivariant case. We let $S_\mu\subset S_d$
  be the subgroup of the symmetric group acting on $[1,n]$ which
preserves the flag  of subsets $\{1,\dots, d_1\}\subset \{1,\dots,
d_2\}\subset \cdots\subset  \{1,\dots, d_n\}$ and let 
  $R_\mu=\K[x_1,\dots,x_d]^{S_\mu}=H^*_{G}(\Fl(\mu))$.

By definition, a singular Soergel bimodule is a sum of shifts of
summands of a tensor product of the form
\begin{equation}
R_{\mu_1}\otimes_{R_{\nu_1}}R_{\mu_2}\otimes_{R_{\nu_2}}\cdots
\otimes_{R_{\nu_{k-1}}}R_{\mu_k},\label{eq:2}
\end{equation}
for weights with $S_{\mu_1}\subset
S_{\nu_1}\supset S_{\mu_2}\subset \cdots \subset S_{\nu_{k-1}}\supset S_{\mu_k}$.
The ring $R$
 acts diagonally on these bimodules, and as left or right modules they
 are free.  This shows that 
\[(M\otimes_{R_{\nu}} N)\otimes_R\K\cong (M\otimes_R\K)\otimes_{\bar R_\nu}(N\otimes_R\K)\]
so Khovanov and Lauda's bimodules defined in \cite[5.6]{KLIII} are
reductions of the form $M\otimes_R \K$ where $M$ is a singular Soergel
bimodule.  Furthermore, \[\Hom(\bar{B},\bar{B}')\cong
k\otimes_R\Hom(B,B'),\] so we can use \cite[7.4.1]{Wthesis} to compute
the dimension of $\Hom(\bar{B},\bar{B}')$.

It is not completely obvious that every bimodule of this form is one
of Khovanov and Lauda's since they do not include all tensor products of 
bimodules as in \eqref{eq:2}; they will only consider triples
$\mu_i,\nu_i,\mu_{i+1}$ where $s_k,s_{k+1}\notin S_{\nu_i}$ and
\begin{equation}
S_{\mu_i}=\langle s_k,S_{\nu_i}\rangle, S_{\mu_{i+1}}=\langle
s_{k+1},S_{\nu_i}\rangle. \text{ or } S_{\mu_i}=\langle s_{k+1},S_{\nu_i}\rangle, S_{\mu_{i+1}}=\langle
s_{k},S_{\nu_i}\rangle.\label{eq:3}
\end{equation}
However, there is considerable redundancy in
the description of singular Soergel bimodules given in \eqref{eq:2}.  By
\cite[5.4.2]{Wthesis}, it is only necessary to consider tensor products
corresponding to a choice of ``reduced translation sequence''  (as
defined in  \cite[\S 1.3]{Wthesis}).  In type A, a reduced translation
sequence is exactly one as in \eqref{eq:3}.  Thus, $\mathsf{Flag}_d$
contains the reduction of any singular Soergel bimodule; that is, the
category would remain unchanged if we defined 1-morphisms to be
$\bar B:=B\otimes_R\K$ for $B$ a singular Soergel module over $R_{\mu}$ and
$R_{\mu'}$.  

Now, we need to establish that the functor $H^\bullet(\Fl(\mu)\times
  \Fl(\nu);-)$ is an equivalence of categories compatible with
  convolution.  The proof of equivalence is essentially the same as
  that given by Soergel in \cite[4.2.1]{Soe00}, but in a slightly more
  general context.  The category of parity sheaves is generated by
  the pushforward $f_*\K_{BS}$ from an arbitrary generalized
  Bott-Samelson $BS$ by \cite[4.6]{JMW}.  Thus, we need only prove
  full faithfulness and commutation with convolution on these
  sheaves. Faithfulness follows from the same argument as
  \cite[3.2.6]{Soe00} (note that the argument in the paper is
  incorrect, and corrected in \cite{Soecor}).  On the other hand, using
  \cite[3.4.1]{BGS96}, for parity sheaves
  $\mathscr{F},\mathscr{G}$, we have \[\dim  \RHom(\mathscr{F},\mathscr{G})=\sum_{x\in \mathsf{X}^{T\times T}} \dim
  \mathscr{F}_{x}\cdot \dim \mathscr{G}_{x},\] where
  $\mathscr{F}_{x}$ is the stalk at the $T\times T$-fixed point $x$.
  Note that we are just taking total dimensions of the cohomology of
  these complexes; since they are concentrated in odd or even degrees,
  that's the same as the absolute value of Euler characteristic.
  Since these
  dimensions of the stalks $\mathscr{F}_{x}$ also give the multiplicities of $\Delta$- and
  $\nabla$-flags on $H^\bullet(\mathscr{F})$, we have that
\[\dim \RHom(\mathscr{F},\mathscr{G})=\dim \Hom(H^\bullet(\mathscr{F}),H^\bullet(\mathscr{G}))\]
 by
  \cite[7.4.1]{Wthesis}.  Thus, we also have fullness.  

Commutation
  with convolution follows from the compatibility between the
  hypercohomology functor and pullback and pushforward between partial
  flag varieties.  Using the notation of \cite{JMW}, we consider the projection
  $\pi_{\Bd^i}^{\bd'}\colon \Fl({\Bd^i})\to\Fl(\bd)$.  We have 
  $H^\bullet((\pi_{\Bd^i}^{\bd'}\times \id)_*(\pi_{\Bd^i}^{\bd}\times \id)^*\mathscr{F})\cong
  H_{\Bd^{+i}}\otimes_{H_{\Bd}}H^\bullet(\mathscr{F})$ by
  \cite[3.3]{WW}.  
Applying this inductively, we have an isomorphism 
\[H^\bullet((f_1)_*\K_{BS_1}\star (f_2)_*\K_{BS_2})\cong H^\bullet(f_*\K_{BS})\cong \bar{B}_{BS}\cong \bar{B}_{BS_1}\otimes
  \bar{B}_{BS_2}\] where $BS$ is the Bott-Samelson for the
  concatenation of the translation sequences giving $BS_1$ and $BS_2$.
Thus, hypercohomology is a 2-functor and thus a 2-equivalence.
\end{proof}

Thus, this equivalence allows us to define a 2-functor $\Phi_{\mathsf{P}}^{(G)}\colon
\EuScript{U}\to \mathsf{Perv}_d^{(G)}$, which sends \[\eE_i \mapsto (\pi_{\Bd'}\times
\pi_{\Bd})_*\K_{\Bd^i}[d_{i+1}-d_i-1]\qquad \eF_i\mapsto (\pi_{\Bd}\times
\pi_{\Bd'})_*\K_{\Bd^i}[d_i-d_{i-1}].\]
One can think of this as a categorification of the BLM construction.
By standard techniques in \'etale cohomology, if we let
$\mathsf{X}_\Z$ be the canonical integral form of $\mathsf X$, the category
$\mathsf{Perv}_d^{(G)}$ can be defined using $\mathsf X_K$ for any field $K$, and
will not depend on the underlying field, as long as we fix $\K$.
Taking $\K=\mathbb{Q}_\ell, K=\bar{\mathbb{F}}_p$ for $\ell$ and $p$
distinct primes, the sheaves in $\mathsf{Perv}_d$ have
canonical mixed structures of weight 0.  

\nc{\BLM}{\mathcal{B\!L\!M}}

Attached to a mixed sheaf on $\mathsf{X}_{\bar{\mathbb{F}}_p}$, we
have a corresponding function on the rational points $
\mathsf{X}({{\mathbb{F}}_p})$ given by the trace of Frobenius.  This
defines a map $\tau\colon K(\mathsf{Perv}_d^{(G)})\to
\C[\mathsf{X}({{\mathbb{F}}_p})]$.  There is also a BLM map
$\BLM\colon S_p(d,n)\to \C[\mathsf{X}({{\mathbb{F}}_p})]$ which
sends
\begin{equation}
E_i\mapsto %\mathsf{e}_i(x)=
\begin{cases}
  p^{-d_{i+1}+d_i+1} & x\in \operatorname{im}(\pi_{\Bd'}\times
\pi_{\Bd})\\
0 & x\notin \operatorname{im}(\pi_{\Bd'}\times
\pi_{\Bd})
\end{cases}\qquad F_i\mapsto 
%\mathsf{f}_i(x)=
\begin{cases}
p^{-d_i+d_{i-1}} & x\in \operatorname{im}(\pi_{\Bd}\times
\pi_{\Bd'})\\
0 & x\notin \operatorname{im}(\pi_{\Bd}\times
\pi_{\Bd'})
\end{cases}\label{eq:4}
\end{equation}
\begin{lemma}\label{Frobenius-BLM}
  The map $\tau$ is an algebra map, with
  \[\tau([\Phi_{\mathsf{P}}(\eE_i)])=\BLM(E_i)\qquad \tau([\Phi_{\mathsf{P}}(\eF_i)])=\BLM(F_i).\]
\end{lemma}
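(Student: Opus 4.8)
The plan is to derive the lemma from Grothendieck's sheaf--function dictionary, so that the only real content becomes one explicit fibre computation on the generators. Write $\tau(\mathscr F)(x)=\sum_i(-1)^i\operatorname{tr}\!\big(\operatorname{Frob}_x\mid\mathcal H^i(\mathscr F)_{\bar x}\big)$. I would first recall the four compatibilities that make $\tau$ behave like a functor on the constructible derived category over $\bar{\mathbb F}_p$: (i) $\tau(f^*\mathscr F)=\tau(\mathscr F)\circ f$; (ii) $\tau(\mathscr F\otimes_\K\mathscr G)=\tau(\mathscr F)\cdot\tau(\mathscr G)$; (iii) $\tau(f_*\mathscr F)(y)=\sum_{x\in f^{-1}(y)(\mathbb F_p)}\tau(\mathscr F)(x)$ for $f$ proper (Grothendieck--Lefschetz); (iv) $\tau(\mathscr F[1])=-\tau(\mathscr F)$ and $\tau(\mathscr F(1))=p^{-1}\tau(\mathscr F)$.

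Feeding (i)--(iv) into the defining formula for $\mathscr G\star\mathscr H$ in $\mathsf{Perv}_d^{(G)}$ converts the pull--tensor--pushforward into the fibrewise sum $(x,z)\mapsto\sum_{y\in\Fl(\nu)(\mathbb F_p)}\tau(\mathscr G)(x,y)\,\tau(\mathscr H)(y,z)$, up to the scalar produced by the cohomological shift in the convolution (together with the Tate twist that makes the parity sheaves pure of weight $0$). Since that fibrewise sum is exactly the convolution product on $\C[\mathsf X(\mathbb F_p)]$ through which BLM realise $S_p(d,n)$, and the BLM normalisation of that product is chosen precisely so that this scalar disappears, $\tau$ is a ring homomorphism. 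I would isolate the single normalisation identity that this requires and record it as a statement purely about conventions (relating the grading shift $[1]$ of $\mathsf{Perv}_d$, the weight-$0$ mixed structure, and the parameter of $S_p(d,n)$).

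For the generators: by Theorem~\ref{perv=flag} and the definition of $\Phi_{\mathsf P}^{(G)}$ we have $\Phi_{\mathsf P}(\eE_i)=(\pi_{\Bd'}\times\pi_{\Bd})_*\K_{\Bd^i}[d_{i+1}-d_i-1]$. I claim the morphism $\pi_{\Bd'}\times\pi_{\Bd}\colon\Fl(\Bd^i)\to\Fl(\Bd')\times\Fl(\Bd)$ is a closed immersion onto the incidence variety $Z_i=\{(F',F): F'_j=F_j\ (j\neq i),\ F_i\subset F'_i\}$: it is proper with singleton fibres, hence finite; it is injective, since a point of $\Fl(\Bd^i)$ is recovered from its two images (the step of dimension $d_i$ from the $\Bd$-image, the step of dimension $d_i+1$ from the $\Bd'$-image); and $Z_i$ is smooth, being a $\mathbb P^{d_{i+1}-d_i-1}$-bundle over $\Fl(\Bd)$, so a finite birational morphism onto it is an isomorphism. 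Hence $(\pi_{\Bd'}\times\pi_{\Bd})_*\K_{\Bd^i}=\K_{Z_i}$, whose trace function is the indicator of $Z_i(\mathbb F_p)=\im(\pi_{\Bd'}\times\pi_{\Bd})$; applying (iv) to the shift $[d_{i+1}-d_i-1]$ in its weight-$0$ normalisation multiplies this by $p^{-(d_{i+1}-d_i-1)}=p^{-d_{i+1}+d_i+1}$, which is $\BLM(E_i)$. The computation for $\eF_i$ is identical, with $\Fl(\Bd^i)\to\Fl(\Bd)\times\Fl(\Bd')$ having image a $\mathbb P^{d_i-d_{i-1}}$-bundle, producing the factor $p^{-(d_i-d_{i-1})}=p^{-d_i+d_{i-1}}=\BLM(F_i)$.

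I expect the only genuine obstacle to be the bookkeeping of conventions in the multiplicativity statement: making precise how the grading shift $[1]$ of $\mathsf{Perv}_d$ and the canonical weight-$0$ structure together make a degree-$m$ shift scale $\tau$ by an honest power of $p$ rather than by $\pm p^{m/2}$, and checking that the shift in the definition of $\star$ is exactly the one needed so that $\tau$ is strictly, not merely projectively, multiplicative against the BLM-normalised convolution. Once those are pinned down, both assertions fall out of the dictionary. As a fallback, since only the subring generated by the classes $[\Phi_{\mathsf P}(\eE_i)]$ and $[\Phi_{\mathsf P}(\eF_i)]$ is used afterwards, one could verify multiplicativity of $\tau$ just on that subring; but the general statement costs nothing extra.
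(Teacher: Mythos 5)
Your proposal is correct and follows essentially the same route as the paper: the Grothendieck trace formula (sheaf--function dictionary) gives multiplicativity of $\tau$ under convolution, and the values on the generators come from the trace functions of the pushforwards, with the powers of $p$ arising from the twist needed to keep the mixed structure pure of weight $0$. Your extra verification that $\pi_{\Bd'}\times\pi_{\Bd}$ is an isomorphism onto the incidence variety is a more explicit elaboration of what the paper dispatches as ``by definition,'' and the normalisation bookkeeping you flag is exactly the point the paper also leaves at the level of conventions.
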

\begin{proof}
  The Grothendieck trace formula implies that this map $\tau$ sends
  convolution of sheaves to 
  convolution of functions, so that shows that $\tau$  is an algebra map. Also, by
  definition it sends the classes of the sheaves
  $(\pi_{\Bd'}\times \pi_{\Bd})_*\K_{\Bd^i}[d_{i+1}-d_i-1]$ and
  $ (\pi_{\Bd}\times \pi_{\Bd'})_*\K_{\Bd^i}[d_i-d_{i-1}]$ to the
  functions of \eqref{eq:4}; the powers of $p$ appear because we must
  twist the mixed structure to remain pure of weight 0.
\end{proof}

\begin{lemma}\label{perv=schur}
  The Grothendieck group  $K(\mathsf{Perv}_d^{(G)})\cong K(\mathsf{Flag}_d^{(G)})$ is
  isomorphic to the $q$-Schur algebra.  If $\K$ is of characteristic
  0, then the classes of self-dual indecomposable objects correspond to the
  canonical basis.
\end{lemma}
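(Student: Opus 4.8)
The plan is to work throughout with $\mathsf{Perv}_d$, which is legitimate by Theorem~\ref{perv=flag}, and to treat only the non-equivariant case: exactly as in the proof of Theorem~\ref{perv=flag}, the equivariant version differs only by an extra $H^\bullet(BG)$-module structure which, just as in the passage from $\tU$ to $\eU$, does not change the Grothendieck group. I will (1) produce a $\Z[q^{\pm1}]$-algebra homomorphism $\psi$ from the $q$-Schur algebra to $K(\mathsf{Perv}_d)$, (2) prove it is an isomorphism by a rank count, and (3) in characteristic $0$ match the self-dual indecomposables with the canonical basis $\dbB_d$.

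\textbf{Step 1.} The $2$-functor $\Phi_{\mathsf{P}}\colon\eU\to\mathsf{Perv}_d$ constructed above induces a ring homomorphism on (split) Grothendieck groups. Since $K(\eU)=K(\tU)=\dbU$ and $\Phi_{\mathsf{P}}$ sends every weight $\mu$ not of the form $\mu_{\Bd}$ to the zero object, this homomorphism annihilates the two-sided ideal generated by the corresponding idempotents $1_\mu$; by the description of $S_q(d,n)$ recalled in the introduction as the quotient of $\dbU$ killing exactly those $1_\mu$, it factors through a homomorphism $\psi\colon S_q(d,n)\to K(\mathsf{Perv}_d)$. To see that $\psi$ is surjective, recall from the proof of Theorem~\ref{perv=flag} (via \cite[4.6]{JMW}) that each indecomposable parity sheaf $\mathcal{E}(w)$ on $\Fl(\mu)\times\Fl(\nu)$ occurs, with multiplicity one, as the top summand of the pushforward $\K_{BS}$ of the constant sheaf along a Bott--Samelson resolution of the closure $\overline{\mathcal{O}}_w$ of the $G$-orbit labelled by $w$, every other summand of which is a shift of some $\mathcal{E}(w')$ with $\mathcal{O}_{w'}$ in the boundary. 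Since $[\K_{BS}]$ is a product of shifts of the classes $[\Phi_{\mathsf{P}}\eE_i]$ and $[\Phi_{\mathsf{P}}\eF_i]$, downward induction on the closure order shows that $[\mathcal{E}(w)]$ lies in the image of $\psi$ for every $w$.

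\textbf{Step 2.} The category $\mathsf{Perv}_d$ is Krull--Schmidt, so $K(\mathsf{Perv}_d)$ is a free $\Z[q^{\pm1}]$-module on the classes of the indecomposable parity sheaves. Each $G$-orbit on $\Fl(\mu)\times\Fl(\nu)$ is simply connected, being an iterated affine-space bundle over a product of Grassmannians (cf.\ \cite{BLM}), so there is exactly one indecomposable parity sheaf supported on each orbit closure, and the rank of the $(\mu,\nu)$-block of $K(\mathsf{Perv}_d)$ equals the number of $G$-orbits on $\Fl(\mu)\times\Fl(\nu)$, namely $|S_\mu\backslash S_d/S_\nu|$. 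This is also the rank of the $(\mu,\nu)$-block of $S_q(d,n)$ by \cite{BLM}, so $\psi$ restricts in each block to a surjection between free $\Z[q^{\pm1}]$-modules of equal finite rank; the matrix of such a map has unit determinant, so $\psi$ is an isomorphism.

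\textbf{Step 3.} Suppose now $\K$ has characteristic $0$. Then parity sheaves are finite direct sums of shifts of simple perverse sheaves constant along the $G$-orbits, and since the orbits are simply connected these are the sheaves $\IC(\overline{\mathcal{O}}_w)$; Verdier duality --- which by Theorem~\ref{perv=flag} is the duality intertwined by $\psi$ with the bar involution on $S_q(d,n)$ --- fixes each $\IC(\overline{\mathcal{O}}_w)$ in its natural normalization, so the self-dual indecomposables are precisely the classes $[\IC(\overline{\mathcal{O}}_w)]$. By Lemma~\ref{Frobenius-BLM} the algebra maps $\tau\circ\psi$ and $\BLM$ agree on the generators $E_i,F_i$ and hence everywhere; specializing $q$ to a prime $p$ and then spreading out (the transition matrices and structure constants on both sides are Laurent polynomials in $q$ pinned down by their values at infinitely many primes), $\psi$ carries $[\IC(\overline{\mathcal{O}}_w)]$ to the trace-of-Frobenius function of $\IC(\overline{\mathcal{O}}_w)$ on $\mathsf{X}(\mathbb{F}_p)$, which by the construction of \cite{BLM} is the element of $\dbB_d$ attached to $\mathcal{O}_w$. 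Alternatively, and avoiding finite fields: the stalk vanishing of $\IC$ in degree $0$ away from the open stratum makes $[\IC(\overline{\mathcal{O}}_w)]$ unitriangular, with off-diagonal entries in $q\Z_{\geq0}[q]$, against the standard basis of classes of constant sheaves on orbit closures, which together with self-duality characterizes $\dbB_d$. The step I expect to be the main obstacle is precisely this last one: transporting Verdier duality and the standard basis correctly through the equivalence of Theorem~\ref{perv=flag}, and making the finite-field-to-$\Z[q^{\pm1}]$ comparison fully rigorous --- the geometric input that the $G$-orbits are simply connected with cohomology in even degrees is classical but must be cited carefully.
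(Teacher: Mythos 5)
Your proposal is correct and follows essentially the same route as the paper: factor $K(\Phi)$ through $S_q(d,n)$ using the vanishing of $1_\mu$ for empty flag varieties, deduce surjectivity from \cite[4.6]{JMW} by induction over orbit closures via Bott--Samelson pushforwards, conclude isomorphism by matching the rank with the number of $G$-orbits as in \cite{BLM}, and identify self-dual indecomposables with the canonical basis in characteristic $0$ via Lemma \ref{Frobenius-BLM} and the trace-of-Frobenius description. Your extra remarks (the $\Z[q^{\pm1}]$-freeness rank count and the alternative unitriangularity characterization) are refinements of detail rather than a different argument.
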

\begin{proof}
  First, by necessity, we have a map $K(\Phi)\colon \dbU\to K(\mathsf{Perv}_d^{(G)})$
  induced by the functor $\Phi$.   The latter is spanned by the
  classes $[\mathcal{E}(O)]$ of the unique indecomposable parity sheaf
  $\mathcal{E}(O)$ with support equal to the orbit closure $\bar O$ by
  \cite[4.6]{JMW}.  
As shown in the proof of \cite[4.6]{JMW}, there is a reduced
translation sequence such that the pushforward from the Bott-Samelson in
$\mathsf{Perv}_d$ is supported on $\bar O$ and contains
$\mathcal{E}(O)$ as a summand with multiplicity 1.  The class of the
pushforward is in the image of $K(\Phi)$ by definition, and
we can assume all summands other than $\mathcal{E}(O)$ are by
induction on the dimension of $O$.  Thus, the map $\dbU\to K(\mathsf{Perv}_d^{(G)})$ is surjective.

On the other hand, the kernel of $K(\Phi^{(G)})$ contains $1_\mu$ if
$\Fl(\mu)=\emptyset$.  Thus, $K(\Phi^{(G)})$ factors through the $q$-Schur
algebra, and induces a surjective map $S_q(d,n)\to
K(\mathsf{Perv}_d^{(G)})$. The dimension of these algebras coincide, since
both are equal to the number of $G$-orbits on $\mathsf{X}$.  Thus, the
map is an isomorphism.  

By Lemma
\ref{Frobenius-BLM}, this isomorphism intertwines the maps $\tau$ and
$\BLM$.  If $\K$ has characteristic 0, then the indecomposable parity sheaves
on $\mathsf{X}$ are the simple perverse sheaves.  Thus $\tau$ applied
to one of these classes is the supertraces of Frobenius on the stalks
of the IC sheaf. This is the same as the image under the map $\BLM$ of
the canonical basis, by definition.  This completes the proof.
\end{proof}

Thus, as claimed in the introduction, the category $\mathsf{Flag}_d^{(G)}$
or $\mathsf{Perv}_d^{(G)}$ provides a categorification of $S_q(d,n)$.  Note
that it if $\K$ has characteristic $p>0$, the indecomposables define a
{\bf $p$-canonical} basis $\mathbf{\dot B}_d^{(p)}$, which agrees with
the canonical basis for $p$ large.  We note that this is compatible
with the $p$-canonical basis on the Hecke algebra defined by classes
of parity sheaves on $\Fl(1,2,3,\dots, n-1)$ (see \cite[1.3.4]{JMW}).

\section{Fullness}
\label{sec:fullness}

The main result of this note is that the functor $\Phi_{\mathsf{P}}^{(G)}$ is
``surjective'' in an appropriate sense.  

\begin{theorem}\label{full}
  The functor $\Phi_{\mathsf{P}}^{(G)}\colon\eU\to \mathsf{Perv}_d^{(G)}$ is essentially
  surjective on 1-morphisms and full on 2-morphisms (that is, locally full).  That
  is, for any 1-morphisms $u\to v$, we have an induced surjection $\Hom_{\eU}(u,v)\twoheadrightarrow
  \Hom_{\mathsf{Perv}_d^{(G)}}(\Phi_{\mathsf{P}}^{(G)}
  u,\Phi_{\mathsf{P}}^{(G)} v)$.  

If $n$ is invertible in $\K$, then the restriction of
$\Phi_{\mathsf{P}}$ to $\tU$ has the same properties, but the
restriction of $\Phi_{\mathsf{P}}^{G}$ does not.
\end{theorem}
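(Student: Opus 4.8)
The plan is to work throughout with the bimodule model $\mathsf{Flag}_d^{(G)}$ in place of $\mathsf{Perv}_d^{(G)}$ (legitimate by Theorem~\ref{perv=flag}) and to reduce the statement to its equivariant half. The functor $\Phi_{\mathsf{P}}$ factors as $\Phi_{\mathsf{P}}^{G}$ followed by the reduction $-\otimes_R\K$, and by the proof of Theorem~\ref{perv=flag} one has $\Hom_{\mathsf{Flag}_d}(\bar B,\bar B')\cong\K\otimes_R\Hom_{\mathsf{Flag}_d^G}(B,B')$; since reduction mod $R_+$ is surjective, surjectivity of $\Phi_{\mathsf{P}}^{G}$ on $\Hom$-spaces forces that of $\Phi_{\mathsf{P}}$. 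What makes this reduction available is exactly that $R=H^\bullet(BG)=\K[x_1,\dots,x_d]^{S_d}$ lies in the image of the centre of $\eU$: the far-left bubbles of label $n$ adjoined in passing from $\tU$ to $\eU$ realise (up to sign) the global elementary symmetric functions $e_1,\dots,e_d$, so $\Phi_{\mathsf{P}}^{G}$ is $R$-linear. Thus it is enough to show $\Phi_{\mathsf{P}}^{G}\colon\eU\to\mathsf{Flag}_d^G$ is full and essentially surjective, and for fullness I would first treat Bott--Samelson $1$-morphisms.

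For $u,v$ tensor products of the generators $\eE_i,\eF_i$ and grading shifts, $B_u:=\Phi_{\mathsf{P}}^{G}u$ and $B_v:=\Phi_{\mathsf{P}}^{G}v$ are Bott--Samelson singular Soergel bimodules of type $A$, built by iterated induction and restriction along one-step parabolic inclusions $R_\mu\subset R_\nu$. The crux is the generation statement: every bimodule map $B_u\to B_v$ is a $\K$-linear combination of composites of the elementary morphisms of singular Soergel calculus in type $A$ --- multiplications by elements of the rings $R_\mu$, the units and counits of the induction--restriction biadjunctions attached to the one-step inclusions, and the singular ``light leaf'' maps assembled from these. (In type $A$ a reduced translation sequence is precisely one of the form \eqref{eq:3}, so $\mathsf{Flag}_d^G$ is indeed generated by such bimodules; the spanning statement for $\Hom$-spaces can be extracted from the analysis of singular Soergel bimodules in \cite{Wthesis}, with graded ranks controlled by \cite[7.4.1]{Wthesis}, and for $n=2$ it is the nilHecke/Demazure computation of \cite{BLflag}.) Now each of these elementary morphisms is, by the formulas of \cite[\S6]{KLIII} defining $\Phi$, the image under $\Phi_{\mathsf{P}}^{G}$ of an explicit $2$-morphism of $\eU$: polynomial multiplications come from dots, the one-step (co)units from the cups and caps giving the biadjunction between $\eE_i$ and $\eF_i$, and the light-leaf/Demazure maps from the KLR crossings together with dots and bubbles. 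As $\Phi_{\mathsf{P}}^{G}$ is a $2$-functor, arbitrary composites of these lie in its image, so $\Hom_{\eU}(u,v)\to\Hom_{\mathsf{Flag}_d^G}(B_u,B_v)$ is surjective. (One can cross-check by comparing graded dimensions: $\dim_q\Hom_{\eU}(u,v)$ is given by Khovanov--Lauda's diagrammatic basis and $\dim_q\Hom_{\mathsf{Flag}_d^G}(B_u,B_v)$ by \cite[7.4.1]{Wthesis}, the discrepancy being the $2$-ideal generated by $\id_{1_\mu}$ for $\Fl(\mu)=\emptyset$, i.e.\ the categorified kernel of $\phi$.)

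To pass to arbitrary $1$-morphisms, use that $\eU$ is idempotent-complete on $1$-morphisms (the standard convention, already needed for the canonical-basis statement): any $u$, $v$ is the image of an idempotent $\tilde e_u\in\End_{\eU}(U)$, $\tilde e_v\in\End_{\eU}(V)$ for Bott--Samelson words $U,V$, so $\Hom_{\eU}(u,v)=\tilde e_v\Hom_{\eU}(U,V)\tilde e_u$ maps onto $\Phi_{\mathsf{P}}^{G}(\tilde e_v)\Hom_{\mathsf{Flag}_d^G}(B_U,B_V)\Phi_{\mathsf{P}}^{G}(\tilde e_u)=\Hom_{\mathsf{Flag}_d^G}(\Phi_{\mathsf{P}}^{G}u,\Phi_{\mathsf{P}}^{G}v)$ by the Bott--Samelson case. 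This gives fullness of $\Phi_{\mathsf{P}}^{G}$, hence of $\Phi_{\mathsf{P}}$. For essential surjectivity, by \cite[4.6]{JMW} every object of $\mathsf{Perv}_d^{(G)}\cong\mathsf{Flag}_d^{(G)}$ is a summand of a Bott--Samelson pushforward $\Phi_{\mathsf{P}}^{(G)}U$; the cutting idempotent lies in the algebra $\End_{\mathsf{Flag}_d^{(G)}}(\Phi_{\mathsf{P}}^{(G)}U)$, which is finite-dimensional in each degree, is hit by $\End_{\eU}(U)$ by the fullness just proved, and lifts to an idempotent there by a standard idempotent-lifting argument (as in the proof of Lemma~\ref{perv=schur}), exhibiting the object as $\Phi_{\mathsf{P}}^{(G)}$ of a summand of $U$.

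Finally, the restriction to $\tU$. The equivariant version fails because $\End_{\mathsf{Flag}_d^G}(\Bd)=R_{\mu_{\Bd}}=\K[x_1,\dots,x_d]^{S_{\mu_{\Bd}}}$ contains the global symmetric functions $e_1,\dots,e_d$, and the description of $\End_{\tU}(1_{\mu})$ via bubbles of labels $1,\dots,n-1$ (Chern classes of the first $n-1$ tautological subbundles) shows these are exactly the classes missing from the image of $\End_{\tU}(1_{\mu_{\Bd}})$ --- precisely the ones restored by the label-$n$ bubbles of $\eU$ --- so $\Phi_{\mathsf{P}}^{G}|_{\tU}$ is not full. Non-equivariantly these classes become redundant: in $\bar R_{\mu_{\Bd}}=H^\bullet(\Fl(\mu_{\Bd}))$ the triviality of the total Chern class of the ambient space lets one solve for the missing Chern classes in terms of the Chern classes of the first $n-1$ tautological subbundles, so --- provided $n$ is invertible, so that the normalisations occurring in this substitution and in the relevant Euler classes remain units --- the argument of the previous two paragraphs applies verbatim to $\tU$, giving fullness and essential surjectivity of $\Phi_{\mathsf{P}}|_{\tU}$. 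I expect the principal obstacle throughout to be the generation statement invoked above --- that $\Hom$-spaces between singular Bott--Samelson bimodules in type $A$ are spanned, over the relevant polynomial rings, by one-step (co)units and light leaves --- which has to be read off from the singular Soergel bimodule theory of \cite{Wthesis}; the remainder is base change and idempotent bookkeeping.
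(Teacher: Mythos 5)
Your overall architecture (reduce to the equivariant case, prove fullness on Bott--Samelson $1$-morphisms, then pass to summands by idempotents, and get essential surjectivity from \cite[4.6]{JMW} plus idempotent lifting) is reasonable, and the reduction of $\Phi_{\mathsf{P}}$ to $\Phi_{\mathsf{P}}^{G}$ via $\Hom(\bar B,\bar B')\cong \K\otimes_R\Hom(B,B')$ is sound. But the step you yourself flag as the crux is a genuine gap, not a detail to be ``read off'': you need that $\Hom_{\mathsf{Flag}_d^G}(B_u,B_v)$ for \emph{singular} Bott--Samelson bimodules is spanned by composites of polynomial multiplications and the one-step induction/restriction (co)units (``singular light leaves''). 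This is not in \cite{Wthesis}: Williamson's results give the graded \emph{rank} of these $\Hom$-spaces (\cite[7.4.1]{Wthesis}) and the classification of indecomposables, but no generating set of morphisms, and no such singular diagrammatic calculus existed in the literature at the level of generality you need (the $\mathfrak{sl}_2$ case in \cite{BLflag} and the regular case of Elias--Khovanov \cite{ElKh} are the available inputs). A dimension count cannot substitute for it, since comparing $\dim_q\Hom_{\eU}(u,v)$ with the target only bounds the codimension of the image by the size of an ideal you have not shown is actually killed onto.

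The paper closes exactly this hole in two different ways, and you will need one of them (or a new idea). Proof \#2 never invokes a singular calculus: it first normalizes the weight and then uses the $\mathfrak{sl}_2$ relations to show (equation \eqref{eq:1}, ``translation off and back onto a wall'') that every $1$-morphism between the relevant weights is a summand of one factoring through the regular weight $(1,\dots,1)$, where the targets are honest Soergel bimodules; there Elias--Khovanov's generators exist and \cite[6.5-6]{MSVschur} exhibits explicit preimages in $\eU$. Proof \#1 avoids generators altogether: an induction on the weight order, using the biadjunction $(\eE_i,\eF_i)$ to peel off Chevalley generators, reduces fullness to $\End(\id_\mu)$, where it becomes the concrete statement that bubbles generate $H^*_G(\Fl(\mu))$ via the Chern classes $c(T_i)$. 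On that last point your side remarks also need repair: the label-$n$ bubbles of $\eU$ realize $c(T_n)$, not the global elementary symmetric functions (those are only obtained after combining with the ratios $c(T_{i+1})/c(T_i)$ coming from the other bubbles), and your final paragraph on $\tU$ with $n$ invertible only gestures at the needed computation --- the actual mechanism is that $c(T_k)^n$ is expressible through the ratios using $c(T_1)\cdots c(T_n)=1$, and invertibility of $n$ lets one extract an $n$-th root of this power series via the coefficients $\binom{1/n}{m}$.
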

We hope the statement of this theorem illustrates why we need to use
$\eU$ instead of $\tU$; while $\tU$ will suffice over a field of large
enough characteristic, $\eU$ allows us to make a cleaner statement
covering the equivariant case as well.  Note that it seems the
characteristic requirement seems to be necessary; in \cite{BLflag},
the authors must assume $2$ is invertible to get the $\mathfrak{sl}_2$
version of this result.  

  \begin{lemma}\label{lem:full-indec}
 Let $A$ and $B$ be $\K$-linear 2-categories such that 2-morphism
 spaces are always finite dimensional.  Any 2-functor $\Phi\colon A\to B$ which is essentially surjective on 1-morphisms and full
 on 2-morphisms induces a surjective map $K(\Phi)\colon K(A)\to K(B)$
 that sends the classes in $K(A)$ of indecomposable objects not in the
 kernel of $K(\Phi)$ to the classes of indecomposable objects in
 $K(B)$.  
  \end{lemma}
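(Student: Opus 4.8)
The plan is to reduce the claim to the standard correspondence between indecomposability of a $1$-morphism and locality of its endomorphism algebra. Recall that $K(A)$ is the split Grothendieck group of $1$-morphisms: the free abelian group on isomorphism classes of $1$-morphisms modulo $[u\oplus u']=[u]+[u']$ (in the graded setting, a $\Z[q,q^{-1}]$-module, $q$ acting by grading shift). Since the relevant $\Hom$-categories are $\K$-linear with finite-dimensional morphism spaces and idempotents split, they are Krull--Schmidt, so $K(A)$ is free on the classes of indecomposable $1$-morphisms; in particular $[x]=0$ in $K(B)$ precisely when $x\cong 0$. A $\K$-linear $2$-functor is additive on $\Hom$-categories and hence preserves biproducts, so $K(\Phi)$ is a well-defined map.

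Surjectivity of $K(\Phi)$ is immediate from essential surjectivity on $1$-morphisms: every $1$-morphism $w$ of $B$ satisfies $w\cong\Phi(u)$ for some $1$-morphism $u$ of $A$, so $[w]=[\Phi u]=K(\Phi)([u])$, and such classes generate $K(B)$.

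Now fix an indecomposable $1$-morphism $u$ of $A$ with $K(\Phi)([u])=[\Phi u]\neq 0$, i.e. $\Phi u\not\cong 0$. Since $\Phi$ is a $2$-functor it induces a $\K$-algebra homomorphism $\End_A(u)\to\End_B(\Phi u)$ preserving composition and identities, and this is surjective by fullness on $2$-morphisms. The algebra $\End_A(u)$ is finite-dimensional, and as $u$ is indecomposable it has no idempotents other than $0$ and $1$; for a finite-dimensional algebra over a field this forces $\End_A(u)$ to be local, with nilpotent Jacobson radical and division-ring semisimple quotient. A nonzero quotient of such an algebra is again local: the image of the radical is a nilpotent two-sided ideal, and the resulting further quotient is a quotient of a division ring, hence zero or a division ring; the zero case is ruled out since $\Phi u\not\cong 0$ forces $1_{\Phi u}\neq 0$, so $\End_B(\Phi u)$ is not nilpotent. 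Therefore $\End_B(\Phi u)$ is local, hence $\Phi u$ is indecomposable, and $[\Phi u]=K(\Phi)([u])$ is the class of an indecomposable $1$-morphism of $B$, as claimed.

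The argument is essentially formal; the one mild point to be careful about is the ring-theoretic input that a finite-dimensional algebra with only trivial idempotents is local and that this property descends to nonzero quotients, which I would cite rather than reprove. For the intended application to bases it is worth recording the converse as well: given an indecomposable $w$ in $B$, write $w\cong\Phi(u)$ and decompose $u$ into indecomposables; exactly one summand $u_i$ has $\Phi(u_i)\not\cong 0$, whence $w\cong\Phi(u_i)$ with $u_i$ indecomposable and $[u_i]\notin\ker K(\Phi)$. Thus the classes of indecomposable $1$-morphisms of $B$ are precisely the $K(\Phi)$-images of the classes of indecomposable $1$-morphisms of $A$ lying outside $\ker K(\Phi)$.
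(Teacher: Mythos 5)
Your argument is correct for the statement as written, and its core is the same as the paper's: fullness gives a surjection $\End_A(u)\twoheadrightarrow\End_B(\Phi u)$, and since $\End_A(u)$ is finite dimensional with no nontrivial idempotents it is local, so any nonzero quotient is again local and $\Phi u$ is indecomposable or zero. The paper phrases exactly this via the induced surjection $\End(u)/J(u)\twoheadrightarrow\End(\Phi u)/J(\Phi u)$ of semisimple quotients (division algebra onto division algebra or zero); your locality formulation is the same mechanism. You also make explicit two points the paper leaves implicit -- that essential surjectivity on 1-morphisms gives surjectivity of $K(\Phi)$, and that Krull--Schmidt/idempotent splitting is what ties ``indecomposable'' to ``local endomorphism ring'' and makes $[x]=0$ equivalent to $x\cong 0$ -- which is welcome rather than a deviation.

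The one substantive difference is that the paper's proof contains a second step you omit: for non-isomorphic indecomposables $X,Y$ with nonzero images, $\Phi X\ncong\Phi Y$. The paper gets this by noting that $\End(\Phi(X\oplus Y))/J$ is a quotient of $\End(X\oplus Y)/J\cong D_X\times D_Y$, and such a quotient is $0$, a single division algebra, or the product of two, so it can never be a $2\times 2$ matrix algebra $M_2(D)$, as it would be if $\Phi X\cong\Phi Y\neq 0$ (since $M_2(D)$ is simple but not a division ring, it is not a quotient of $D_X\times D_Y$). This injectivity statement is not demanded by the literal wording of the lemma, but it is what the paper relies on to pass from the lemma to the \emph{bijection} of Theorem \ref{main} and Theorem \ref{p-main}; your closing converse paragraph gives surjectivity onto the indecomposable classes of $B$ but not injectivity on the classes outside the kernel. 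If you intend the lemma to feed into those theorems as in the paper, add this step; it is a one-line extension of the argument you already have.
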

  \begin{proof}
    A full functor induces a surjection
    $\End(X)/J(X) \twoheadrightarrow \End(\Phi X)/J(\Phi X)$ on the
    quotients of endomorphism rings by 
    Jacobson radical.  Thus, if $X$ is
    indecomposable, $\End(X)/J(X)$ is a division algebra, and
    $\End(\Phi X)/J(\Phi X)$ must be a division algebra again or 0.
This shows that $\Phi X$ is indecomposable or 0.

    If $X$ and $Y$ are non-isomorphic indecomposables, $\End(\Phi
    (X\oplus Y))/J(\Phi (X\oplus Y))$ must be 0, a division algebra,
    or the sum of two division algebras.  It cannot contain a $2\times
    2$ matrix algebra over either of these division algebras for
    dimension reasons, and thus we must have $\Phi X\ncong \Phi Y$
    unless both are 0.
  \end{proof}
Applying this result to the functor $\Phi_{\mathsf{P}}$ immediately gives a proof
of Theorem \ref{main}.  In fact, it gives us a
  stronger result, which extends this statement to $p$-canonical bases: \begin{theorem}\label{p-main}
 The map $\phi$ induces a bijection between $\dot {\bf{B}^{(p)}}\setminus(\dot
  {\bf{B}}^{(p)}\cap \ker \phi)$ and $\dot {\bf{B}}^{(p)}_d$.  
\end{theorem}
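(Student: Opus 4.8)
The plan is to run exactly the same argument that produced Theorem~\ref{main}, but now keeping track of the characteristic of $\K$ throughout. First I would invoke Theorem~\ref{full}, which asserts that $\Phi_{\mathsf P}^{(G)}\colon\eU\to\mathsf{Perv}_d^{(G)}$ is essentially surjective on $1$-morphisms and full on $2$-morphisms; crucially, this statement is proven for \emph{any} field $\K$, so in particular for $\K=\mathbb F_p$. The hypotheses of Lemma~\ref{lem:full-indec} are satisfied because $\eU$ and $\mathsf{Perv}_d^{(G)}$ have finite-dimensional $2$-morphism spaces (the former by the basis of Khovanov--Lauda together with the description of the enlarged endomorphisms in $\eU$, the latter because $\Ext$-groups between parity sheaves on a variety with finitely many $T\times T$-fixed points are finite-dimensional, as used in the proof of Theorem~\ref{perv=flag}). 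Hence $K(\Phi_{\mathsf P}^{(G)})$ sends the class of each indecomposable self-dual $1$-morphism of $\eU$ not lying in $\ker K(\Phi_{\mathsf P}^{(G)})$ to the class of an indecomposable self-dual $1$-morphism of $\mathsf{Perv}_d^{(G)}$, and distinct such classes go to distinct classes.

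Next I would identify the two sides. Over $\K=\mathbb F_p$, the classes of self-dual indecomposable $1$-morphisms of $\eU$ are exactly the $p$-canonical basis $\dot{\mathbf B}^{(p)}$ of $\dot{\mathbf U}$ (by the theorem of \cite{WebCB} quoted in Section~\ref{sec:categ-mathfr}, together with the proposition that indecomposability is preserved under $\tU\hookrightarrow\eU$); similarly, by Lemma~\ref{perv=schur} and the remark following it, the classes of self-dual indecomposable $1$-morphisms of $\mathsf{Perv}_d^{(G)}$ in characteristic $p$ form, by definition, the $p$-canonical basis $\dot{\mathbf B}^{(p)}_d$ of the $q$-Schur algebra $S_q(d,n)$. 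It remains only to check that under these identifications the map $K(\Phi_{\mathsf P}^{(G)})$ agrees with $\phi$. This is the content of the categorification statements recalled in the introduction: $K(\eU)\cong\dot{\mathbf U}$, $K(\mathsf{Perv}_d^{(G)})\cong S_q(d,n)$, and $\Phi_{\mathsf P}^{(G)}$ categorifies $\phi$ because it sends $1_\mu\mapsto 0$ precisely when $\Fl(\mu)=\emptyset$, i.e.\ when $\mu$ is not a partition-type weight, which is exactly the kernel of $\phi$. Assembling these, Lemma~\ref{lem:full-indec} yields that $\phi$ restricts to a bijection from $\dot{\mathbf B}^{(p)}\setminus(\dot{\mathbf B}^{(p)}\cap\ker\phi)$ onto $\dot{\mathbf B}^{(p)}_d$.

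The main obstacle in this argument is entirely deferred to Theorem~\ref{full}: one must verify that the fullness of $\Phi_{\mathsf P}^{(G)}$ genuinely holds in positive characteristic, which is why the theorem is stated for $\eU$ (and the equivariant version) rather than for $\tU$ alone, and why a condition like $n$ invertible appears in the $\tU$ version. Given Theorem~\ref{full}, everything here is bookkeeping: matching the abstract output of Lemma~\ref{lem:full-indec} with the concrete bases on each side and checking that no characteristic-$0$ input (such as the decomposition theorem) has sneaked into the identifications of the Grothendieck groups — it has not, since $K(\mathsf{Perv}_d^{(G)})\cong S_q(d,n)$ was established in Lemma~\ref{perv=schur} by a dimension count over an arbitrary field.
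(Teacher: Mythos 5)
Your proposal is correct and takes essentially the same route as the paper: the paper deduces Theorem~\ref{p-main} immediately by applying Lemma~\ref{lem:full-indec} to $\Phi_{\mathsf P}^{(G)}$ (using Theorem~\ref{full}, which is proved over an arbitrary field $\K$) together with the identifications of $\dot{\mathbf B}^{(p)}$ and $\dot{\mathbf B}^{(p)}_d$ with classes of self-dual indecomposable $1$-morphisms over $\K=\mathbb F_p$. Your write-up merely makes explicit the bookkeeping the paper leaves implicit.
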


  We'll give two proofs of  Theorem \ref{full}. This may seem
  redundant but both have interesting generalizations in different
  directions, and thus we think both are worth including.

  \begin{proof}[Proof \# 1]
Since the natural functor $\mathsf{Perv}_d^G\to \mathsf{Perv}_d$ is
surjective, let us consider $\mathsf{Perv}_d^G$ first.
We order the dimension functions $\Bd_i$ by pointwise comparison, and
induct downward in this order.  This is the same as the usual order on
the weight lattice generated by $\mu-\al_i<\mu$.
  
We
    have that $\Fl(d,d,\dots, d)$ is a
    point, so the only non-trivial 1-morphism is the identity, and its
    endomorphisms are just the scalars.  In this case, fullness is
    clear.  This establishes the base case.
 
    Assume that we know the theorem for 1-morphisms $\mu'\to \nu'$
    where either $\mu'>\mu$ or $\nu'>\nu$.  Assume that $u$ and $v$
    are indecomposable. 
Recall that $\eU$ has a ``triangular decomposition'' into two
subcategories $\eU^+$ and $\eU^-$ generated by the $\eE_i$'s and
$\eF_i$'s respectively.
 We now prove two smaller claims:
    \begin{enumerate}
    \item if $v$ is not in the image of $\eU^-$, then
      $\Hom_{\eU}(u,v)\twoheadrightarrow \Hom_{\mathsf{Perv}_d ^{G}}(\Phi_{\mathsf{P}}^G
      u,\Phi_{\mathsf{P}}^G v)$.
    \item if $u$ is not in the image of $\eU^+$, then
      $\Hom_{\eU}(u,v)\twoheadrightarrow \Hom_{\mathsf{Perv}_d ^{G}}(\Phi_{\mathsf{P}}^G
      u,\Phi_{\mathsf{P}}^G v)$.
    \end{enumerate}
    Let us first consider (1).  If $v$ is not in the image of $\eU^-$
    then by \cite[5.12]{WebCB}, we have that $v$ is a summand of
    $\eE_iv'$ for some 1-morphism $\mu+\al_i\to \nu$; let
    $e\colon \eE_iv'\to \eE_iv'$ by an idempotent whose image is $v$,
    and $v''$ be the image of $1-e$, that is the complementary
    summand.  By assumption, we have a surjection
    \[\Hom_{\eU}(u,\eE_iv')\cong
    \Hom_{\eU}(\eF_iu,v')\twoheadrightarrow
    \Hom_{\mathsf{Perv}_d ^{G}}(\eF_i\Phi_{\mathsf{P}}^{G} u,\Phi_{\mathsf{P}}^{G} v')\cong
    \Hom_{\mathsf{Perv}_d ^{G}}(\Phi_{\mathsf{P}}^{G} u,\eE_i\Phi_{\mathsf{P}}^{G} v').\]
    With we compose this map with the idempotent $\Phi_{\mathsf{P}}^{G} e$, then we
    obtain a surjection
    $\Hom_{\eU}(u,\eE_iv')\twoheadrightarrow
    \Hom_{\mathsf{Perv}_d^G}(\Phi_{\mathsf{P}}^G u,\Phi_{\mathsf{P}}^G v)$,
    which kills $\Hom_{\eU}(u,v'')$; thus, the induced map
    $\Hom_{\eU}(u,\eE_iv'
    \Hom_{\mathsf{Perv}_d^G}(\Phi_{\mathsf{P}}^G u,\Phi_{\mathsf{P}}^G v)$
    is surjective as desired.  Claim (2) follows by a symmetric
    argument.

    Thus, it remains to establish
    $\Hom_{\eU}(u,v)\twoheadrightarrow \Hom_{\mathsf{Perv}_d^{G}}(\Phi_{\mathsf{P}}^{G}
    u,\Phi_{\mathsf{P}}^{G} v)$
    for $u$ in the image of $\eU^-$ and $v$ in the image of $\eU^+$.
    For reasons of weight, the target can only be non-zero if $\mu=\nu$ and
    $u=v=\id_\mu$.  Thus, we must prove that
    \[\Hom_{\eU}(\id_\mu,\id_\mu)\twoheadrightarrow \Hom_{\mathsf{Perv}_d^G}(\id_\mu,\id_\mu)\cong H^*_G(\Fl(\mu)) \twoheadrightarrow 
    \Hom_{\mathsf{Perv}_d^G}(\id_\mu,\id_\mu)\cong H^*(\Fl(\mu)).\]

    Recall that $H^*_G(\Fl(\mu))\cong R_\mu$  is generated by the Chern classes
    $c_k(T_i)=e_k(x_{d^{i-1}+1},\dots ,x_{d^i})$ for the tautological bundles $T_i=W^i/W^{i-1}$ on
    $\Fl(\mu)$ modulo the geometric relations that $c_k(T_i)=0$ if
    $k>\rk T_i=d_i-d_{i-1}$.  By
    \cite[(6.47)]{KLIII}, the image of $\Hom_{\eU}(\id_\mu,\id_\mu)$
    is the algebra generated by the coefficients of
    $c(T_{i+1})/c(T_{i})$.
Extending to $\eU$ gives $c(T_n)$ as the
    image of bubbles with label $n$.  Thus the coefficients of
    $c(T_i)$ for each $i$ lie in this image, and they generate.

Now consider working non-equivariantly in
$\mathsf{Perv}_d$.  To complete the proof of the theorem, we need to
show that if $n$ is invertible, then
$\Hom_{\tU}(\id_\mu,\id_\mu)$ surjects to $
\Hom_{\mathsf{Perv}_d}(\id_\mu,\id_\mu) \cong H^*(\Fl(\mu))\cong
\bar R_\mu$.  The ring $\bar R_\mu$ is isomorphic to the quotient of
$R_\mu$ with the additional relation that  $c(T_1)\cdots c(T_n)=1$.  In
this case, the image of $\Hom_{\tU}(\id_\mu,\id_\mu)$
contains \[c(T_k)^n=c(T_1)\cdots c(T_n)\prod_{i=1}^{k-1}
(c(T_{i+1})/c(T_{i}))^i\prod_{i=k}^{n-1}(c(T_{i})/c(T_{i+1}))^{n-i}.\]
Thus, if $n$ is invertible then the fractional binomial coefficents
$\binom{1/n}{m}$ exist and we can take the $n$th root of this power
series, and thus obtain $c(T_k)$.  This shows the surjectivity.
  \end{proof}

  \begin{remark}\label{rem:JH}
    While we avoided using these results for the sake of clarity, this
    argument essentially uses Rouquier's Jordan-H\"older decomposition \cite[5.8]{Rou2KM}
    of a categorical module for $\mathsf{Perv}_d$, with its action of
    $\mathfrak{sl}_n\times \mathfrak{sl}_n$ on the left and right.
    The category $\mathsf{Perv}_d$ has a filtration whose successive
    quotients are categorifications of simple modules for
    $\mathfrak{sl}_n\times \mathfrak{sl}_n$.  These quotients are generated by
    the image of $\id_\mu$ modulo the subcategory generated by weights
    $> \mu$.  Thus,  we can reduce to showing fullness on $\id_\mu$ by
    \cite[5.4]{Rou2KM}, and complete the proof as above.
  \end{remark}

  \begin{proof}[Proof \#2]
As in Proof \# 1,  we'll only consider $\mathsf{Perv}_d^G$.  In this
proof, we'll use an opposite reduction.  Instead of relying on the
    highest weight vectors, we concentrate on the special weight
    $(1,\dots, 1)$ when $d=n$.  The corresponding flag variety is the
    complete flag variety $\Fl(1,2,\dots, n-1)$, and the corresponding
    bimodules are honest Soergel bimodules.  In this case, we can exploit
    the work of Elias and Khovanov which gives a diagrammatic
    description of this category.

We note that we can always
    increase $n$ by adding a redundant step in the flag, or decrease
    $n$ by removing a step where $d^i=d^{i+1}$.  
If $d^j<j$,
    then we must have a redundancy we can remove to decrease $j$;
    thus, we may assume that $d^j\geq j$ for all $j$.  On the other
    hand, if $d^j>j$, we can add redundant flags until
    $d^j=d^{j+1}=\cdots =d^{d^j}$.  Necessarily, the next step of the flag must
    be of higher dimension.   If we think of this as a weight of
    $\mathfrak{sl}_n$, it is a positive integer $a_1$ followed by
    $a_1-1$ zeros, then $a_2$ followed by $a_2-1$ zeros, etc.  Thus,
    we may assume that $n=d$ and our weight is of this form.

We now use the claim:
    \begin{enumerate}
    \item every morphism $\mu\to \nu$ between two weights of the form
      above is a summand of a composition factoring through the weight
      $(1,\dots, 1)$.  
    \end{enumerate}
The principle we use here is very simple: if $\mu$ is a weight with
$\al_i^\vee(\mu)>0$, then the identity functor is a summand of
$\eE_i\eF_i$, by the $\mathfrak{sl}_2$ relations.  It will also
considerably simplify our computation if we note that when $\eE_i$ acts
trivially on this weight space (as is the case if $d^{i-1}=d^i$), then
$\eE_i\eF_i \cong \id^{\al_i^\vee(\mu)}$.  Thus, if $a_i=2$ in
one of the examples above, we have that on the $\mu$-weight category: \[\eE_{d^i+1}\eF_{d^i+1}\cong
\id^{\oplus 2}\oplus \eF_{d^i+1}\eE_{d^i+1}\]
similarly, if $a_i=3$, then 
\begin{equation*}
 \eE_{d^i+1}\eE_{d^i+2}\eE_{d^i+1}\eF_{d^i+1}\eF_{d^i+2}\eF_{d^i+1}\cong
\eE_{d^i+1}\eE_{d^i+2}\eF_{d^i+2}\eF_{d^i+1}^{\oplus 2}
\cong \eE_{d^i+1}\eF_{d^i+1}^{\oplus 2}
\cong \id^{\oplus 6}
\end{equation*}

Thus, if our weight
is of the form described above, applying this fact inductively, we
find that whenever $d_j=j$, we have that
\begin{multline}
  \id^{(d_{j+1}-d_j)!}\cong \eE_{d^j+1}\eE_{d^j+2}\cdots
  \eE_{d^{j+1}}\eE_{d^j+1}\eE_{d^j+2}\cdots \eE_{d^{j+1}-1}\cdots
  \eE_{d^i+1}\eE_{d^i+2}\eE_{d^i+1}\\\eF_{d^i+1}\eF_{d^i+2}\eF_{d^i+1}\cdots
  \eF_{d^{j+1}-1}\cdots \eF_{d^j+2}\eF_{d^j+1}\eF_{d^{j+1}}\cdots
  \eF_{d^j+2}\eF_{d^j+1}.\label{eq:1}
\end{multline}
Aficionados of category $\cO$ will recognize this as the principle
that translation off of and then back onto a wall is a multiple of the
identity functor.  Applying this for each $d_j=j$ gives the desired
factorization.  

Thus, it suffices to prove fullness for 1-morphisms of the form $u'\circ
u$ and $v\circ v'$ factoring through $(1,\dots,1)$ (as we argued earlier, fullness is not harmed by
replacing a module with one that it is a summand of).  By the
isomorphism 
\[ \Hom(u'\circ
u,v\circ v')\cong \Hom(v^L\circ u,v'\circ u^R)
\] we can now assume that $\mu=\nu=(1,\dots, 1)$.  In this case, 
every indecomposable module is a summand of a 1-morphism factoring
through a weight space killed by the map to the Schur algebra (in
which case, the fullness is trivial) or it is a summand of a
Bott-Samelson object $Z_{\Bi}:=\eE_{i_1}\eF_{i_1}\eE_{i_2}\eF_{i_2}\cdots \eE_{i_n}\eF_{i_n}$.
The image $\Phi_{\mathsf{P}}^G (Z_{\Bi})$ is a Bott-Samelson bimodule.  The
$\Hom$ spaces between these have been calculated by Elias and
Khovanov \cite{ElKh}.  Furthermore, in \cite[6.5-6]{MSVschur},
Mackaay, Sto\v{s}i\'c and Vaz show the surjectivity of the map $\Hom_{\eU}(u,v)\twoheadrightarrow \Hom_{\mathsf{Perv}_d^G}(\Phi_{\mathsf{P}}^G
      u,\Phi_{\mathsf{P}}^G v)$ by giving diagrams in $\eU$ which hit each of the
      Elias and Khovanov's generators.  Thus, surjectivity follows.
  \end{proof}

This fact also has a significant consequence in the theory of
categorification.    Recall that Mackaay, Sto\v{s}i\'c and Vaz
\cite{MSVschur} define a categorification $\mathcal{S}(n,d)$ of the $q$-Schur algebra as
a subquotient of $\tU$; one restricts objects to $(d_1,\dots, d_n)$
where $\sum_id_i=d$, and then sets to 0 any 1-morphism factoring through
objects with any $d_i<0$.  As before, it is really more convenient to
use $\eU$, and we let $\EuScript{S}(n,d)$ to the corresponding
quotient of this category.
\begin{proposition}
The category $ \EuScript{S}(n,d)$ is
equivalent to $\mathsf{Perv}_d^G$.
\end{proposition}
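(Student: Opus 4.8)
The plan is to present $\mathsf{Perv}_d^G$ as the target of $\Phi_{\mathsf P}^G$ after passing to the quotient that defines $\EuScript{S}(n,d)$, and then to upgrade the fullness of Theorem~\ref{full} to an equivalence. First I would record that $\Phi_{\mathsf P}^G$ sends $\id_\mu$ to $0$ exactly when $\Fl(\mu)=\emptyset$, i.e.\ exactly when some coordinate of $\mu$ is negative; this is precisely the class of objects one kills in forming $\EuScript{S}(n,d)$ from $\eU$. Hence $\Phi_{\mathsf P}^G$ annihilates every $1$-morphism factoring through such a weight and descends to a $2$-functor $\bar\Phi\colon\EuScript{S}(n,d)\to\mathsf{Perv}_d^G$. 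Since the quotient $2$-functor $\eU\to\EuScript{S}(n,d)$ is surjective on $2$-morphisms, Theorem~\ref{full} immediately gives that $\bar\Phi$ is essentially surjective on $1$-morphisms and full on $2$-morphisms, so the only remaining point is faithfulness.

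For faithfulness I would argue by a graded dimension count. Both categories are graded, Krull--Schmidt, and have $2$-morphism spaces finite-dimensional in each degree, with the endomorphism algebra of every indecomposable $1$-morphism positively graded with only scalars in degree $0$: for $\EuScript{S}(n,d)$ because this holds in $\eU$ by \cite[8.11]{WebCB} and passes to the quotient, and for $\mathsf{Perv}_d^G\cong\mathsf{Flag}_d^G$ because it is standard for singular Soergel bimodules. Consequently each has Grothendieck group $S_q(d,n)$ (for $\EuScript{S}(n,d)$ by \cite{MSVschur} together with the bubble comparison of \S\ref{sec:categ-mathfr}, for $\mathsf{Perv}_d^G$ by Lemma~\ref{perv=schur}), $K(\bar\Phi)$ intertwines the two natural surjections from $\dbU$ and is therefore an isomorphism, and by Lemma~\ref{lem:full-indec} (applied in each degree) $\bar\Phi$ matches up the indecomposable $1$-morphisms of the two categories compatibly with this identification. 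By fullness, $\Hom_{\EuScript{S}(n,d)}(u,v)\twoheadrightarrow\Hom_{\mathsf{Perv}_d^G}(\bar\Phi u,\bar\Phi v)$ for all $u,v$, so it suffices to check that these graded spaces have the same dimension.

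To get that equality I would invoke the standardly stratified structure coming from Rouquier's Jordan--H\"older decomposition of a categorical $\mathfrak{sl}_n\times\mathfrak{sl}_n$-module \cite[5.8]{Rou2KM} (cf.\ Remark~\ref{rem:JH}): on both sides the standard objects are built from the $1_\mu$ by applying $\eE_i$'s, $\bar\Phi$ carries standards to standards, and the graded multiplicities of standards in the indecomposables are the transition coefficients between the $p$-canonical and monomial bases of $S_q(d,n)$, data intrinsic to $S_q(d,n)$ and hence the same in the two categories; a graded reciprocity argument then expresses $\dim_q\Hom$ between any two indecomposables purely in terms of this common data, giving the desired equality and hence faithfulness. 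I expect this last point to be the main obstacle: fullness and essential surjectivity alone are never enough, since one can quotient a positively graded algebra by a radical ideal without disturbing either its indecomposables or its Grothendieck group, so one genuinely needs control of graded $\Hom$-dimensions. An alternative route, closer to Proof~\#2, is to reduce via the isomorphisms used there to the weight $(1,\dots,1)$ with $d=n$, where $\mathsf{Perv}_d^G$ is the category of Soergel bimodules, and then to show that the $\eU$-diagrams produced in \cite[6.5--6]{MSVschur} are not merely a spanning set but a basis of the relevant $\Hom$ spaces, matching the Elias--Khovanov \cite{ElKh} computation.
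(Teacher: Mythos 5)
Your setup is right and matches the paper: $\Phi_{\mathsf P}^G$ kills exactly the weights with a negative entry, so it descends to $\bar\Phi\colon \EuScript{S}(n,d)\to \mathsf{Perv}_d^G$, and Theorem \ref{full} gives essential surjectivity and fullness, leaving faithfulness as the real issue. You also correctly diagnose why fullness plus an isomorphism of Grothendieck groups is insufficient. But the mechanism you propose for faithfulness has a genuine gap: the ``graded reciprocity'' formula you want to apply to $\EuScript{S}(n,d)$ is not available there a priori. Such a formula (Hom dimensions computed from multiplicities of standard objects) is a theorem on the geometric/Soergel side, where it comes from the $\Delta$- and $\nabla$-flag structure on hypercohomology; for the diagrammatic quotient $\EuScript{S}(n,d)$, asserting that its graded Hom spaces have the size predicted by standard multiplicities is, given fullness, exactly equivalent to the faithfulness you are trying to prove. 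Nothing in the definition of $\EuScript{S}(n,d)$ as a quotient of $\eU$ rules out the possibility that the quotient kills strictly more than the kernel of $\Phi_{\mathsf P}^G$, and the standard-multiplicity data read off from the (graded) Grothendieck group cannot detect this, for precisely the reason you yourself note. The same criticism applies to your alternative route: the Mackaay--Sto\v si\'c--Vaz diagrams hitting the Elias--Khovanov generators gives fullness, but upgrading ``spanning set'' to ``basis'' is again the whole content of faithfulness and is not supplied.

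What the paper does instead, and what your argument is missing, is a concrete identification of the kernel. Using the triangular-decomposition argument of Proof \#1 (equivalently Rouquier's Jordan--H\"older filtration, as in Remark \ref{rem:JH}), faithfulness is reduced to injectivity of the single map $\End_{\EuScript{S}(n,d)}(\id_\mu)\to \End_{\mathsf{Perv}_d^G}(\id_\mu)\cong H^G_{\Bd}$. The kernel of $\End_{\eU}(\id_\mu)\to H^G_{\Bd}$ is generated by the coefficients of the Chern series $c(T_i)$ in degrees $>d_i-d_{i-1}$, so one must show these specific elements already vanish in $\EuScript{S}(n,d)$. This is done by an explicit bubble computation: realizing $c(T_i)$ via a power series $C_i'(u)$ of nested bubble diagrams whose inner region carries a weight with a $-1$ entry (hence the diagram dies in the quotient when the dot count is non-negative), and proving $C_i'(u)=B_i(u)C_{i+1}'(u)$ by bubble slides and \cite[4.10]{KLMS}, whence $C_i'(u)=C_i(u)$ by induction. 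That calculation is the substantive step your proposal replaces with an unproven structural claim, so as written the argument does not close.
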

We fully expect that as a quotient of $\tU$, the 2-category $\mathcal{S}(n,d)$ is equivalent to the image of the functor  $\Phi_{\mathsf{P}} \colon 
\tU\to \mathsf{Perv}_d^G$; the proof below should work to show this,
but is made a bit more difficult by the fact that the image of
$\End_{\tU}(  \id_\mu)$ in  $H^G_{\Bd}$ is not an especially well
understood ring.
\begin{proof}
  Since $\Phi_{\mathsf{P}} $ kills the correct weights, we have a
  functor $\Phi_{\EuScript{S}}\colon \EuScript{S}(n,d)\to \mathsf{Perv}_d^G$
  which we need to show is injective.  As suggested in Remark
  \ref{rem:JH}, we can show this using Rouquier's Jordan-H\"older
  filtration, or repeating the argument of Proof \# 1.  In either
  case, we reduce to showing it suffices to check injectivity of the
  map 
\[\End_{\EuScript{S}(n,d)}(\id_\mu)\to \End_{\mathsf{Perv}_d^G}(\id_\mu,).\]
Let $\Bd$ be the dimensions of the flag corresponding to $\mu$.  The
target space is simply $H^G_{\Bd}$, the space of symmetric polynomials
for the permutation group $S_{\Bd}$.   Thus, we need only show that
the kernel of the map from $\End_{\eU}(1_\mu,1_\mu)$ to
$\End_{\mathsf{Perv}_d^G}(1_\mu,1_\mu)$ is spanned by classes which
are 0 in $\EuScript{S}(n,d)$.

Let $B_i(u)=\sum u^k\circlearrowleft_i(k)$ be the power series loaded
with bubbles $\circlearrowleft_i(k)$ with label $i$ and degree $k$.
The image of $B_i(u)$ in $H^G_\Bd$ is $c(T_{i})/c(T_{i+1})$ with the
convention that $T_{n+1}$ is the trivial bundle, so for each $i$, we
have that
$c(T_i)=C_k(u)=\prod_{k=i}^nB_k(u)$.  Thus, we need to show that the
vanishing of this product in degrees $>d_i-d_{i-1}$ is one of the
relations of $\mathcal{S}(n,d)$.  We can prove this by induction on
$i$.  If $i=n$, then the bubble $\circlearrowleft_i(k)$ will vanish
if it is not fake, which holds when  
$k>d-d_{n-1}$.  

We consider $n-i+1$ concentric nested bubbles with labels
$n,n-1,n-2,\dots, i$ with thicknesses $d-d_n, d_{n}-d_{n-1},\dots,
d_{i+1}-d_i,1$, all counterclockwise except the innermost, and with
the only dots being on the innermost.  The outside region is labeled
with the weight $(d_1,d_2-d_1,\dots, d-d_n,0)$.  We use this to
construct the power series
\[ C'_i(u)=\sum_{a=d_{i-1}-d_i-1}^\infty u^{a+d_i-d_{i-1}+1}\hspace{1mm}\tikz[very thick,baseline]{\draw[dir] (0,0) circle
  (10pt);\draw[rdir] (0,0) circle (20pt);\draw[rdir] (0,0)
  circle (40pt);  \node [circle,fill=black,label=right:{$a$}, inner
  sep=2.5pt] at (-.37,0) {}; \node at (-1,0){$\cdots$};  \node at (.5,0){$i$}; \node at (0,1){$i+1$}; \node at (1.7,0){$n$};} \]
Note that the inner region has weight $(d_1,d_2-d_1,\dots,
d_{i}-d_{i-1}+1,-1,d_{i+1}-d_i,\dots, d-d_n)$, so  if the
number of dots $a$ is non-negative, then this diagram vanishes in
$\EuScript{S}(n,d)$.

The bubble slides show that sliding the innermost bubble out gives a
gives that $C'_i(u)$ is equal to $B_i(u)$ times a power series loaded with the $n-i$  remaining nested bubbles, with
the innermost now decorated with the elementary symmetric function of
degree $b$ times $u^b$.  The result \cite[4.10]{KLMS} shows that this
is the power series $C'_{i+1}(u)$.  Thus, we have that $C'_i(u)=B_i(u)
C'_{i+1}(u)$, so by induction $C'_i(u)=C_i(u)$.  

As we noted before, this shows that the coefficients $C_i^{(a)}$ for
$a> d_{i}-d_{i-1}$ vanish in $\EuScript{S}(n,d)$.  This shows the
injectivity of the map, and thus the proof is complete.
\end{proof}

It's clear that we have a functor $\mathcal{S}(n,d+n)\cong \mathsf{Perv}_{d+n}^G\to
\mathcal{S}(n,d) \cong \mathsf{Perv}_d^G$ which is induced by the functor $\eU\to \eU$ which
sends $(\mu_1,\dots, \mu_n)\to (\mu_1-1,\dots, \mu_n-1)$ and is the
identity on 1- and 2-morphisms.  Thus, these categories form an
inverse system as in \cite[\S 3]{BLflag}.
Combining
Theorem \ref{full} with the injectivity proven in
\cite[6.16]{KLIII} shows that this local system has the same ``partial
graded locally full and faithful'' property as shown in \cite[2.12]{BLflag}.
This allows us to generalize their theorem \cite[3.2]{BLflag} with the
same proof to show that:
\begin{proposition}
  The 2-category $\eU$ is the inverse limit of the equivariant flag
  categories $\mathsf{Flag}_d^G$.  If $n$ is invertible in $\K$ then
  $\tU$ is the inverse limit of the categories  $\mathsf{Flag}_d$.\qed
\end{proposition}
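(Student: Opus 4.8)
The plan is to transport the $\mathfrak{sl}_2$ argument of \cite[3.2]{BLflag} to $\mathfrak{sl}_n$; the point is simply that its two inputs are now available. Write $\Phi_d\colon\eU\to\mathsf{Flag}_d^{G}$ for the level-$d$ functor $\Phi_{\mathsf P}^{G}$ followed by the equivalence $\mathsf{Perv}_d^{G}\cong\mathsf{Flag}_d^{G}$ of Theorem~\ref{perv=flag}. As recalled just before the statement, the $\Phi_d$ are compatible with the connecting $2$-functors $\mathsf{Flag}_{d+n}^{G}\to\mathsf{Flag}_d^{G}$ coming from the weight-shift endofunctor of $\eU$, precisely because that endofunctor is the identity on $1$- and $2$-morphisms. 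Organizing the $\Phi_d$ by the residue class of $d$ modulo $n$ and inserting the obvious grading shifts, exactly as in \cite[\S 3]{BLflag}, one assembles them into a single $2$-functor $\Psi\colon\eU\to\varprojlim_d\mathsf{Flag}_d^{G}$, and what must be shown is that $\Psi$ is an equivalence of $2$-categories. The second assertion is the verbatim variant with $\mathsf{Flag}_d$ replacing $\mathsf{Flag}_d^{G}$ and $\tU$ replacing $\eU$, which uses the part of Theorem~\ref{full} valid when $n$ is invertible in $\K$.

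By the criterion of \cite[2.12]{BLflag}, $\Psi$ will be an equivalence once the tower $\{\mathsf{Flag}_d^{G}\}$ is shown to have the ``partial graded locally full and faithful'' property relative to $\eU$: for fixed $1$-morphisms $u,v$ of $\eU$ and each degree $m$, the composite $\Hom^m_{\eU}(u,v)\to\Hom^m_{\mathsf{Flag}_d^{G}}(\Phi_d u,\Phi_d v)$ should be an isomorphism for all $d$ sufficiently large (equivalently, the connecting maps on these $\Hom$-spaces are eventually isomorphisms and their inverse limit is $\Hom^m_{\eU}(u,v)$). This is exactly where the two inputs enter. Surjectivity of the displayed map for \emph{every} $d$, compatibly with the connecting maps, is Theorem~\ref{full}; injectivity of the map for $d\gg0$ is the injectivity statement \cite[6.16]{KLIII}. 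Combining the two, the map is an isomorphism for $d$ large, which is the required property.

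It then remains to see that $\Psi$ is essentially surjective on objects and on $1$-morphisms. A compatible family of dominant weights in $\varprojlim_d\mathsf{Flag}_d^{G}$ is determined, once $d$ is large, by a single weight $\mu\in\Z^n$ of which it is the sequence of common shifts (and is the zero object for small $d$), so it is the image under $\Psi$ of $\mu\in\Ob\eU$. For $1$-morphisms, essential surjectivity of each $\Phi_d$ on $1$-morphisms (Theorem~\ref{full}), together with the property established in the previous paragraph, shows that any $1$-morphism of the limit is $2$-isomorphic to the image of a $1$-morphism of $\eU$. With these facts and \cite[2.12]{BLflag}, $\Psi$ is an equivalence, and the same argument applies in the non-equivariant case when $n$ is invertible in $\K$.

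The one point needing care is the interplay between the internal grading and the ``eventually'' quantifiers: one must know that the $d\gg0$ in \cite[6.16]{KLIII} can be chosen uniformly over a single finite-dimensional piece $\Hom^m_{\eU}(u,v)$ --- which holds because for fixed $u,v$ only finitely many defining relations of $\eU$ are relevant and each of them is already visible in $\mathsf{Flag}_d^{G}$ once $d$ is large --- and that forming the $2$-categorical inverse limit introduces no coherence data beyond what \cite[\S 3]{BLflag} handles in the $\mathfrak{sl}_2$ case. Granting these, the proof is formal; the substantive content, Theorem~\ref{full}, is already in hand.
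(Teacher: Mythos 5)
Your proposal is correct and follows essentially the same route as the paper: the paper likewise combines Theorem \ref{full} with the injectivity of \cite[6.16]{KLIII} to obtain the ``partial graded locally full and faithful'' property of \cite[2.12]{BLflag}, and then invokes the proof of \cite[3.2]{BLflag} verbatim for the tower of functors $\eU\to\mathsf{Flag}_d^G$ (resp.\ $\tU\to\mathsf{Flag}_d$ when $n$ is invertible). The extra details you supply on essential surjectivity and uniformity in $d$ are consistent with, and merely flesh out, that argument.
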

Even if $n$ is not invertible, we can write $\tU$ as the inverse limit of
its image in $\mathsf{Flag}_d$ or $\mathsf{Flag}_d^G$.

 \bibliography{../gen}
\bibliographystyle{amsalpha}
\end{document}